\crefname{equation}{}{}
\crefname{figure}{{\sc Figure}}{{\sc Figure}}
\crefname{subsection}{Subsection}{Subsections}
\newtheorem{theorem}{Theorem}[section]
\newtheorem{proposition}[theorem]{Proposition}
\newtheorem{lemma}[theorem]{Lemma}
\newtheorem{corollary}[theorem]{Corollary}
\newtheorem{claim}[theorem]{Claim}
\newtheorem*{claim*}{Claim}
\theoremstyle{definition}
\newtheorem{definition}[theorem]{Definition}
\newtheorem{remark}[theorem]{Remark}
\newcommand{\F}{{\mathbb F}}
\newenvironment{poc}{\begin{proof}[Proof of claim]}{\end{proof}}
\numberwithin{equation}{section} 
\numberwithin{figure}{section}
\numberwithin{table}{section}
\newcommand{\beq}{\begin{small} \begin{equation}}
\newcommand{\eeq}{\end{equation} \end{small}}
\newcommand{\beqn}{\begin{small} \begin{equation*}}
\newcommand{\eeqn}{\end{equation*} \end{small}}
\title{$f$-Diophantine sets over finite fields via quasi-random hypergraphs from multivariate polynomials}
 \author{Seoyoung Kim}
 \address{Departement Mathematik und Informatik, Universit\"at Basel, Spiegelgasse 1, 4051 Basel, Switzerland}
 \email{seoyoung.kim@unibas.ch}
 \author{Chi Hoi Yip}
 \address{School of Mathematics\\ Georgia Institute of Technology\\ Atlanta, GA 30332\\ United States}
 \email{cyip30@gatech.edu}
 \author{Semin Yoo}
 \address{Discrete Mathematics Group \\ Institute for Basic Science \\ 55 Expo-ro Yuseong-gu, Daejeon 34126 \\ South Korea}
 \email{syoo19@ibs.re.kr}
\subjclass[2020]{Primary: 11D72, 11T06. Secondary: 05C65, 05C80, 11B30}
\keywords{Diophantine tuple, quasi-random hypergraph, polynomial, finite field}
\begin{document}

\begin{abstract}
We investigate $f$-Diophantine sets over finite fields via new explicit constructions of families of quasi-random hypergraphs from multivariate polynomials. 
In particular, our construction not only offers a systematic method for constructing quasi-random hypergraphs but also provides a unified framework for studying various hypergraphs arising from multivariate polynomials over finite fields, including Paley sum hypergraphs, and hypergraphs derived from Diophantine tuples and their generalizations. 
We derive an asymptotic formula for the number of $k$-Diophantine $m$-tuples, answering a question of Hammonds et al., and study some related questions for $f$-Diophantine sets, extending and improving several recent works. 
We also sharpen a classical estimate of Chung and Graham on even partial octahedra in Paley sum hypergraphs. 
\end{abstract}

\maketitle

\section{Introduction}\label{sec: intro}

Throughout the paper, let $k \geq 2$ be a fixed integer. Let $q$ be an odd prime power, $\F_q$ the finite field with $q$ elements, and $\F_q^*=\F_q \setminus \{0\}$. Let $\overline{\F_q}$ be the algebraic closure of $\F_q$.

A \textit{Diophantine tuple} is a classical object in number theory, consisting of a set of integers (or elements of a given ring) $\{a_{1}, a_{2},\ldots, a_{m}\}$ with the property that the product of any two distinct elements in the set is one less than a square. Fermat is attributed to the first known example $\{1,3,8,120\}$ of integral Diophantine $4$-tuples. Euler extended the set to the rational $5$-tuple $\{1, 3, 8, 120, \frac{777480}{8288641}\}$, and it had been conjectured that there is no \textit{integral} Diophantine $5$-tuple. Finding large Diophantine tuples is related to the counting problem of integral points on elliptic curves: in order to find a positive integer $d$ such that $\{a,b,c,d\}\subset \mathbb{Z}$ is a Diophantine $4$-tuple, we need to solve the following equation for $d$ such that $s,t,r\in \mathbb{Z}$:
\[ad+1=s^2, \quad bd+1=t^2, \quad cd+1=r^2.\]
This is related to the problem of finding an integral point $(d,str)\in \mathbb{Z}\times \mathbb{Z}$ on the elliptic curve
$y^2=(ax+1)(bx+1)(cx+1)$, where $a,b,c \in \mathbb{Z}$.
From this, we can deduce that there are no infinite Diophantine $m$-tuples by Siegel's theorem on the integral points of elliptic curves. On the other hand, Siegel's theorem cannot give an effective upper bound on the size of Diophantine tuples. Similarly, finding a Diophantine tuple of size greater than or equal to $5$ is related to the problem of finding integral points on hyperelliptic curves of genus $g\geq 2$. The conjecture on the non-existence of Diophantine $5$-tuple was confirmed by He, Togb\'e, and Ziegler in \cite{HTZ19}.

The notion of Diophantine tuples has been generalized in various directions, and we refer to a recent book by Dujella \cite{D24} for a comprehensive discussion. In this paper, we focus on the following generalization: Let $k \geq 2$ and $q$ be an odd prime power. We let $f \in \F_q[x_1, \ldots, x_k]$ be a symmetric polynomial. Following~\cite{BDHT16, YY25}, we say $A \subset \F_q$ is an \emph{$f$-Diophantine set over $\F_q$} if $f(a_1, a_2, \ldots, a_k)$ is a square in $\F_q$ whenever $a_1, a_2, \ldots, a_k$ are distinct elements in $A$. In particular, when $k=2$ and $f(x_1,x_2)=x_1x_2+1$, an $f$-Diophantine set over $\F_q$ is exactly a \emph{Diophantine tuple over $\F_q$}, which is of special interest~\cite{21DK, KYY, KYY24b, KYY24a, Shparlinski23}. More generally, when $k \geq 2$ and $f(x_1,x_2,\ldots, x_k)=x_1x_2\cdots x_k+1$, following \cite{k23}, an $f$-Diophantine set with size $m$ over $\F_q$ is said to be a \emph{$k$-Diophantine $m$-tuple over $\F_q$}.  

Important tools for studying Diophantine tuples and their generalizations include Diophantine approximations, Diophantine equations, sieve methods (in particular, Gallagher's larger sieve and its variants; see for example \cite{CY26, KYY, Y24}), elliptic curves, character sum estimates (for example, the Paley graph conjecture in \cite{GM20}), as well as other tools from number theory and arithmetic geometry. For instance, Kazalicki--Naskrecki~\cite{KN} gave a rational parametrization of Diophantine triples by studying the arithmetic of K3 surfaces, and Dujella--Kazalicki \cite{21DK} explored the connection between Diophantine tuples over finite fields and modular forms.

Despite their arithmetic origin, Diophantine tuples have recently been investigated with the help of combinatorial and graph-theoretic methods, which provide powerful tools for understanding their structural properties and quantitative behavior. For example, we refer to Batta--Hajdu--Pongr\'acz \cite{BHP25}, B\'{e}rczes--Dujella--Hajdu--Luca \cite{BDHL11}, Bugeaud--Gyarmati \cite{BG04}, Croot--Yip \cite{CY26}, and Gyarmati--S\'ark\"ozy--Stewart \cite{GSS02} for applications of extremal graph theory to Diophantine tuples and their generalizations over integers, and to Gyarmati \cite{G01} as well as G\"{u}lo\u{g}lu--Murty \cite{GM20} for the finite field setting. These works illustrate how methods from Ramsey theory and Tur\'an theory, can be employed to study Diophantine tuples. In addition, Ramsey theory and spectral graph theory have been successfully applied in the previous work of the authors \cite{KYY24b} to investigate Diophantine tuples over finite fields. Moreover, the authors \cite{KYY24b} as well as Batta--Hajdu--Pongr\'acz \cite{BHP25} studied properties of various notions of ``Diophantine graphs" induced by Diophantine tuples.

In the present paper, we adopt a combinatorial viewpoint and employ tools from polynomials over finite fields to study $f$-Diophantine sets over finite fields. In particular, we focus on the quasi-randomness of ``Diophantine hypergraphs" induced from polynomials over finite fields. We begin by introducing quasi-random hypergraphs and defining the hypergraph $Y_{f,q}$ corresponding to a polynomial $f\in \F_q[x_1,x_2,\ldots, x_k]$. We then state some random-like properties of the hypergraph $Y_{f,q}$ and discuss its applications to the study $f$-Diophantine sets. In particular, we address a question of Hammonds et al. \cite{k23} and improve a result of Chung and Graham \cite{CG90} on Paley sum hypergraphs. These two results correspond to $f$-Diophantine sets with $f=x_1x_2\cdots x_k+1$ and $f=x_1+x_2+\cdots+x_k$, respectively, and they extend to a larger class of polynomials $f$. 

\subsection{Quasi-random hypergraphs}
The concept of quasi-randomness for graphs was introduced in \cite{CGW89}, and independently in \cite{T87b,T87a}.
In particular, Chung, Graham, and Wilson \cite{CGW89} showed that several
seemingly unrelated properties of quasi-random graphs are, in fact, equivalent.
Building on this, Chung and Graham \cite{C90, CG90} extended the concept of quasi-randomness to uniform hypergraphs, maintaining a similar perspective as for quasi-random graphs. 
In particular, they showed that when the edge density of a family of hypergraphs is $1/2$, several analogous statements of the properties of quasi-random graphs are asymptotically equivalent.
Since the work of Chung and Graham, various approaches to generalizing the concept of quasi-randomness for hypergraphs have been explored in the literature; see for example \cite{C90, CHPS12, HT89}, addressing different directions and applications.
For the purpose of our paper, we adopt the notion of quasi-randomness defined by Chung and Graham \cite{CG90} as their definitions include subgraph counting for general hypergraphs. 

We first introduce some necessary definitions closely following the presentation in \cite{CG90}. We say that a hypergraph is \emph{$k$-uniform} if each hyperedge contains precisely $k$ vertices.
Let $G$ and $H$ be $k$-uniform hypergraphs.
We use $N^*_G(H)$ to denote the number of labeled occurrences of $H$ as an induced subgraph of $G$.
For a family $\mathcal{F}$ of hypergraphs, we denote $N_G^*(\mathcal{F})=\sum_{H\in \mathcal{F}}N_G^*(H)$. 
Next, we define a special type of $k$-uniform hypergraph that plays a key role in defining quasi-random hypergraphs.
A $k$-uniform hypergraph is called a \textit{$k$-octahedron}, denoted by 
\[\mathcal{O}_k=\mathcal{O}_k(u_1(0),u_1(1),u_2(0),u_2(1),\ldots,u_k(0),u_k(1)),\]
where the vertex set consists of $u_i(\varepsilon_i)$ such that $1 \le i \le k$ and $\varepsilon_i \in \{0,1\}$.
The edge set is defined as the set of all $k$-sets of the form $\{u_1(\varepsilon_1),u_2(\varepsilon_2),\ldots,u_k(\varepsilon_k) \mid \varepsilon_i \in \{0,1\},1\le i\le k\}$.
In other words, a $k$-octahedron is a complete $k$-uniform $k$-partite hypergraph where each vertex class has two vertices.
Further, we say a $k$-uniform hypergraph is an \textit{even partial octahedron} (EPO) if it has the same vertex set as $\mathcal{O}_k$, and the intersection of its edge set and the edge set of $\mathcal{O}_k$ has even size.  
The set of all EPOs defined on $\mathcal{O}_k$ is denoted by $\mathcal{O}_k^e$. 

Following Chung and Graham~\cite{CG90}, we introduce the following properties of $k$-uniform hypergraphs. 
\begin{definition}\cite[Section 3]{CG90}  \label{def: quasirandom}
Let $\mathcal{G}$ be a family of $k$-uniform hypergraphs with an unbounded number of vertices; we say $\mathcal{G}$ has a property below if all $G \in \mathcal{G}$ satisfy the property.   
\begin{enumerate}
    \item[$Q_1(s)$ :] For all $k$-uniform hypergraphs $G'$ with $s$ vertices,
    \[N^*_{G}(G')=(1+o(1))\frac{n^s}{2^{\binom{s}{k}}}, \quad \text{ as } |V(G)|=n \to \infty.\]
    \item[$Q_3$ :] The number of induced copies of EPOs in $G$ is 
    \begin{equation*}
        N^*_G(\mathcal{O}_k^e) \le (1+o(1))\frac{n^{2k}}{2}, \quad \text{ as } |V(G)|=n \to \infty.
    \end{equation*}
\end{enumerate}
\end{definition}
The main result in \cite{CG90} states that if $s\geq 2k$, then $Q_1(s)$ and $Q_3$ are equivalent \footnote{On the other hand, as remarked in \cite[Section 11]{CG90}, $Q_1(s)$ is not equivalent to $Q_3$ for $s\leq k+1$. }. Following \cite{CG90}, we say $\mathcal{G}$ is a family of \textit{quasi-random ($k$-uniform) hypergraphs} if $\mathcal{G}$ has property $Q_3$. For the purpose of this paper, we also introduce the following two additional equivalent properties, $Q_1$ and $Q_3'$, that can be used to define quasi-random hypergraphs.
\begin{itemize}
    \item[$Q_1$ :] $Q_1(s)$ holds for all positive integers $s$. 
    \item[$Q_3'$ :] The number of induced copies of EPOs in $G$ is 
    \[N^*_G(\mathcal{O}_k^e)= (1+o(1))\frac{n^{2k}}{2}, \quad \text{ as } |V(G)|=n \to \infty.\]
\end{itemize}

It is easy to show that for each positive integer $s$, $Q_1(s+1)$ implies $Q_1(s)$ \cite[page 108]{CG90}. It follows that $Q_1$ and $Q_3$ are equivalent, since, as mentioned earlier, $Q_3$ is equivalent to $Q_1(s)$ for each $s\geq 2k$. The equivalence of $Q_3$ and $Q_3'$ can be found in \cite[Fact 2]{CG90}. 

Despite the importance of the quasi-randomness of hypergraphs in combinatorics, explicit families of quasi-random hypergraphs remain largely unexplored. One of the classical examples of quasi-random hypergraphs is the family of Paley (sum) hypergraphs; see \cite[Section 10]{CG90}.  For an odd prime power $q$, the \textit{Paley (sum) $k$-graph} $P_k(q)$ is the $k$-uniform hypergraph whose vertices are the elements of $\F_q$, such that a $k$-set $\{u_1,u_2,\ldots,u_k\}$ forms an edge if and only if $u_1+u_2+\cdots+u_k$ is a square in $\F_q$.
Other than Paley $k$-graphs and even intersection $k$-graphs in \cite[Section 10]{CG90}, we refer to \cite{AH20} for other examples of such families.

The main goal of this paper is to provide a new systematic approach to constructing explicit families of quasi-random hypergraphs using multivariate polynomials over finite fields.
By using the quasi-randomness of these new constructions, we also resolve a question posed in \cite[Section 4]{k23} related to Diophantine equations. Specifically, we derive an asymptotic formula for the number of $k$-Diophantine $m$-tuples over finite fields; see \cref{sec:app} for a more precise statement.

\subsection{Main results}
Let $q$ be an odd prime power and let $f\in \mathbb{F}_q[x_1,x_2,\ldots,x_k]$ be a symmetric polynomial. We define the $k$-uniform hypergraph $Y_{f,q}$ on $\F_q$, such that a $k$-element subset $\{a_1,a_2\ldots,a_k\}$ of $\F_q$ forms an edge if and only if $f(a_1,a_2,\ldots,a_k)$ is a square in $\F_q$.

\begin{definition}\label{def:admissible}
Let $f \in \F_{q}[x_1,x_2,\ldots,x_k]$ be a symmetric polynomial with degree $d$. We say that $f$ is \emph{admissible} if it satisfies the following two conditions:
\begin{itemize}
    \item $f(x_1,x_2,\ldots,x_k)$ is not a constant multiple of the square of a polynomial in $\F_q[x_1,x_2,\ldots,x_k]$.  
    \item $f$ satisfies the \emph{primitive condition}: when we write $$f(x_1,x_2,\ldots , x_k)=\sum_{j=0}^{d}H_{j}(x_2,x_3,\ldots, x_{k})x_1^j,$$
   the polynomials $H_0,H_1, H_2, \ldots, H_d$ do not share a common zero over $\overline{\F_q}$.
\end{itemize}
\end{definition}

We now present our main result.
\begin{theorem}\label{thm:main}
Let $k\geq 2$ and let $d\geq 1$. Let $\mathcal{Y}_{d}$ be the family consisting of all hypergraphs $Y_{f,q}$, where $q$ is an odd prime power and $f \in \F_q[x_1,x_2,\ldots,x_k]$ is an admissible polynomial with degree $d$.
Then $$N^*_{Y_{f,q}}(\mathcal{O}_k^e)=\frac{q^{2k}}{2}+O(q^{2k-1})$$
holds uniformly for all $Y_{f,q} \in \mathcal{Y}_{d}$. In particular, $\mathcal{Y}_{d}$ is a family of quasi-random $k$-uniform hypergraphs.
\end{theorem}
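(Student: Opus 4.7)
Let $\chi$ denote the quadratic character of $\F_q^*$, extended by $\chi(0)=0$.  I first rewrite the EPO count as a character sum.  For each ordered tuple $(u_1(0),u_1(1),\dots,u_k(0),u_k(1))$ of $2k$ distinct vertices, the indicator that the number of octahedral edges is even equals $\tfrac{1}{2}\bigl(1+\prod_{\varepsilon\in\{0,1\}^k}(-\chi(f_\varepsilon))\bigr)$ whenever every value $f_\varepsilon := f(u_1(\varepsilon_1),\dots,u_k(\varepsilon_k))$ is non-zero.  Since $2^k$ is even and the tuples with either a coordinate repetition or a vanishing $f_\varepsilon$ together contribute only $O(q^{2k-1})$, one obtains
\[
N^*_{Y_{f,q}}(\mathcal{O}_k^e) \;=\; \frac{q^{2k}}{2} \;+\; \frac12 \sum_{\mathbf{y}\in\F_q^{2k}} \chi\!\bigl(F(\mathbf{y})\bigr) \;+\; O(q^{2k-1}),
\]
where $F(\mathbf{y}):=\prod_\varepsilon f(y_{1,\varepsilon_1},\dots,y_{k,\varepsilon_k})$.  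It therefore suffices to prove $\sum_{\mathbf{y}}\chi(F(\mathbf{y}))=O(q^{2k-1})$.

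The second step is a fibration over the last $2k-2$ coordinates.  Setting $\mathbf{y}':=(y_{2,0},y_{2,1},\dots,y_{k,0},y_{k,1})$ and
\[
G(y,\mathbf{y}') \;:=\; \prod_{\varepsilon'\in\{0,1\}^{k-1}} f\bigl(y,\,y_{2,\varepsilon'_2},\dots,y_{k,\varepsilon'_k}\bigr),
\]
a splitting of the product according to $\varepsilon_1$ yields $F(\mathbf{y})=G(y_{1,0},\mathbf{y}')\,G(y_{1,1},\mathbf{y}')$, so by multiplicativity of $\chi$
\[
\sum_{\mathbf{y}\in\F_q^{2k}} \chi(F(\mathbf{y})) \;=\; \sum_{\mathbf{y}'\in\F_q^{2k-2}} \left(\sum_{y\in\F_q}\chi\bigl(G(y,\mathbf{y}')\bigr)\right)^{\!2}.
\]
Call $\mathbf{y}'$ \emph{bad} if $G(y,\mathbf{y}')\in\F_q[y]$ is a constant multiple of a square in $\overline{\F_q}[y]$.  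For non-bad $\mathbf{y}'$, Weil's theorem gives an inner sum of size $O_{k,d}(\sqrt{q})$ and hence a squared sum of size $O_{k,d}(q)$; for bad $\mathbf{y}'$ the squared sum is trivially at most $q^2$.

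The heart of the proof, and the main obstacle, is to show that the number of bad $\mathbf{y}'$ is $O_{k,d}(q^{2k-3})$.  Working in $R:=\F_q[\mathbf{y}',y]$, I would first exploit the primitive condition (together with the symmetry of $f$, which ensures that the primitive condition holds with respect to every variable) to prove that every irreducible factor of $f$ in $\F_q[x_1,\dots,x_k]$ depends on each $x_i$: if $P\in\F_q[x_1,\dots,\widehat{x_i},\dots,x_k]$ divided $f$, then $P$ would divide every coefficient of the $x_i$-expansion of $f$, and the non-empty zero locus of $P$ would contradict the primitive condition for $x_i$.  Consequently, each irreducible factor of $f_{\varepsilon'}(y,\mathbf{y}'):=f(y,y_{2,\varepsilon'_2},\dots,y_{k,\varepsilon'_k})$ lies in the subring $R_{\varepsilon'}\subset R$ generated by its own $k$ variables and depends on all of them; in particular it cannot divide $f_{\delta'}$ for any $\delta'\neq\varepsilon'$.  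Thus each irreducible factor of $G=\prod_{\varepsilon'}f_{\varepsilon'}$ in $R$ has multiplicity equal to its multiplicity in the unique $f_{\varepsilon'}$ containing it, and since $f$ is not a constant times a square, $G$ has a $y$-dependent irreducible factor of odd multiplicity in $R$; by Gauss's lemma this persists in $\F_q(\mathbf{y}')[y]$, so $G$ is not a constant (in $\F_q(\mathbf{y}')$) times a square there.

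Finally, to transfer this non-squareness to individual fibres, I would write $G=D\cdot H^2$ in $R$ with $D$ squarefree and $\deg_y D\geq 1$, and in odd characteristic further separate the inseparable part as $D=D_1^{p^r}$ with $D_1$ separable in $y$ over $\F_q(\mathbf{y}')$.  Since $p^r$ is odd, $G(y,\mathbf{y}'_0)$ is a constant times a square in $\overline{\F_q}[y]$ only when $D_1(y,\mathbf{y}'_0)$ is; a discriminant-plus-leading-coefficient argument then confines the bad $\mathbf{y}'_0$ to the zero locus of a fixed non-zero polynomial $\Phi\in\F_q[\mathbf{y}']$ of degree $O_{k,d}(1)$, which by Schwartz--Zippel contains at most $O_{k,d}(q^{2k-3})$ points.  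Combining the good contribution $q^{2k-2}\cdot O_{k,d}(q)$ with the bad contribution $O_{k,d}(q^{2k-3})\cdot q^2$ gives a total of $O_{k,d}(q^{2k-1})$, yielding the stated estimate and hence quasi-randomness.
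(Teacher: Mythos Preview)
Your overall architecture matches the paper's: reduce $N^*_{Y_{f,q}}(\mathcal{O}_k^e)$ to a character sum, fibre over the last $2k-2$ coordinates, apply Weil's bound on good fibres, and show the bad set $\mathcal{B}$ has size $O_d(q^{2k-3})$. Where you diverge is in the treatment of $\mathcal{B}$. The paper first bounds (via its \cref{prop:Y}) the tuples for which the \emph{single} factor $f(x,u_2(0),\dots,u_k(0))$ is already a constant times a square, and then for the remaining tuples argues that some other factor must share a root with it and bounds those by a direct Schwartz--Zippel count. You instead prove that the full product $G=\prod_{\varepsilon'}f_{\varepsilon'}$ is not a constant times a square in $R=\F_q[\mathbf{y}',y]$ and attempt to specialise. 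Your argument that every irreducible factor of $f$ depends on each $x_i$ (from the primitive condition and symmetry), and hence that irreducible factors of $f_{\varepsilon'}$ and $f_{\delta'}$ cannot coincide for $\varepsilon'\neq\delta'$, is correct and gives a clean proof that $G$ has a $y$-dependent irreducible factor of odd multiplicity.

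The gap is in your last step. You write $G=D\cdot H^2$ with $D$ squarefree and then ``separate the inseparable part as $D=D_1^{p^r}$''. Since $D$ is squarefree this forces $r=0$, so the decomposition is vacuous; and even the intended reading $D(y,\mathbf{y}')=D_1(y^{p^r},\mathbf{y}')$ does not in general produce a $D_1$ all of whose irreducible factors are separable in $y$ (different irreducible factors of $D$ can have different inseparability exponents, e.g.\ $f$ could have both $x_1^p-x_2$ and a separable factor). When some irreducible factor $P$ of $D$ satisfies $\partial P/\partial y=0$, one has $P\mid \partial D/\partial y$ and the discriminant $\operatorname{disc}_y(D)$ vanishes identically, so there is no non-zero $\Phi$ coming from ``discriminant plus leading coefficient''. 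The statement you need---that a polynomial in $\F_q[\mathbf{y}',y]$ which is not a constant times a square and never vanishes identically in $y$ specialises to a constant times a square for at most $O((\deg)^2)q^{2k-3}$ values of $\mathbf{y}'$---is exactly the content of the paper's \cref{prop:Y}, whose proof avoids discriminants entirely by explicitly solving for the coefficients of a putative square root and exhibiting a non-zero obstruction polynomial $g_\ell$. Since you have already checked that $G$ is not a constant times a square and that $G(y,\mathbf{y}'_0)$ is never identically zero (the primitive condition for $G$), invoking that proposition for $G$ (its proof does not use symmetry) would close the gap.
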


As a direct consequence of \cref{thm:main}, we have the following corollary.
\begin{corollary}\label{cor: Paley}
The number of induced copies of EPOs in $P_k(q)$ is $q^{2k}/2+O(q^{2k-1}).$
In particular, the family of Paley $k$-graphs is quasi-random.
\end{corollary}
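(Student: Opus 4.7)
The plan is simply to apply \cref{thm:main} to the specific symmetric polynomial
\[
f(x_1,x_2,\ldots,x_k) = x_1+x_2+\cdots+x_k \in \F_q[x_1,\ldots,x_k],
\]
which has degree $d=1$. By the very definition of $Y_{f,q}$, a $k$-subset $\{u_1,\ldots,u_k\}\subseteq \F_q$ is an edge of $Y_{f,q}$ if and only if $u_1+\cdots+u_k$ is a square in $\F_q$; this matches the edge condition of $P_k(q)$ exactly, so $P_k(q)=Y_{f,q}$.

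It remains to verify that $f$ is admissible in the sense of \cref{def:admissible}. Symmetry is clear, and since $f$ is a nonzero polynomial of degree $1$, it cannot be a constant multiple of the square of any polynomial. To check the primitive condition, I would write
\[
f(x_1,x_2,\ldots,x_k) = H_1(x_2,\ldots,x_k)\,x_1 + H_0(x_2,\ldots,x_k),
\]
so that $H_1=1$ and $H_0 = x_2+\cdots+x_k$. Since $H_1$ is the nonzero constant $1$, the polynomials $H_0, H_1$ vacuously share no common zero over $\overline{\F_q}$, and the primitive condition is satisfied.

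With admissibility verified, \cref{thm:main} applied to the family $\mathcal{Y}_1$ immediately yields
\[
N^*_{P_k(q)}(\mathcal{O}_k^e) = \frac{q^{2k}}{2} + O(q^{2k-1}),
\]
which is property $Q_3'$ (with $n=q$) and hence, by the equivalence of $Q_3$ and $Q_3'$ recorded earlier in the introduction, the family $\{P_k(q)\}_q$ is quasi-random. Since the corollary is a direct specialization of the main theorem, I do not anticipate any obstacles beyond the routine verification of admissibility carried out above.
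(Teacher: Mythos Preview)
Your proposal is correct and follows exactly the route the paper intends: the corollary is stated as a direct consequence of \cref{thm:main}, and you have simply spelled out the routine verification that $f(x_1,\ldots,x_k)=x_1+\cdots+x_k$ is admissible (degree $1$, hence not a constant times a square, and $H_1\equiv 1$ ensures the primitive condition). Nothing further is needed.
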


In particular, \cref{cor: Paley} improves the error term in the previous result $N_{P_k(q)}^*(\mathcal{O}_k^e)= q^{2k}/2+O(q^{2k-1/2})$
proved by Chung and Graham \cite[Section 10]{CG90}.

In \cref{sec:background}, we provide further motivation for the two conditions used to define admissible polynomials. These two conditions are fundamental. Indeed, if the first condition is not satisfied, then the graph $Y_{f,q}$ would be far from being quasi-random; see \cref{rem:far}. While the second condition seems restrictive, it is not hard to show that if $d$ and $k$ are fixed, then almost all symmetric polynomials in $\F_q[x_1,x_2,\ldots, x_k]$ with degree $d$ are admissible, as $q\to \infty$; see \cref{rem:almostall}. Thus, informally speaking, almost all $Y_{f,q}$ are quasi-random. 
In addition, we compare our second condition with the notion ``primitive kernel" for $2$-variable polynomials introduced by Gyarmati and S\'{a}rk\"{o}zy \cite{GS08}; see \cref{rem:primitivekernel}.

A key ingredient in our proof of \cref{thm:main} is the character sum computation carried out in the
proof of \cref{prop:key}.

\subsection{Applications to Diophantine tuples over finite fields}\label{sec:app}

Finally, we discuss some applications of \cref{thm:main} to Diophantine tuples. \cref{thm:main} implies the following corollary immediately.

\begin{corollary}\label{cor:Dq}
The family of $k$-graphs induced by $k$-Diophantine tuples is quasi-random. More precisely, if $k\geq 2$, then the family of $k$-graphs $\mathcal{D}_{k}=\{Y_{f,q}: q \text{ is an odd prime power}\}$ is quasi-random, where $f(x_1,x_2,\ldots, x_k)=x_1x_2\cdots x_k+1$.
\end{corollary}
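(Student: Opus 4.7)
The plan is to derive this corollary as an immediate consequence of \cref{thm:main} by checking that $f(x_1, x_2, \ldots, x_k) = x_1 x_2 \cdots x_k + 1$ is an admissible polynomial of degree $k$. Once this is verified, $Y_{f,q}$ belongs to the family $\mathcal{Y}_k$ for every odd prime power $q$, so the uniform estimate supplied by \cref{thm:main} gives $N^*_{Y_{f,q}}(\mathcal{O}_k^e) = q^{2k}/2 + O(q^{2k-1})$ for every $Y_{f,q} \in \mathcal{D}_k$, which is exactly property $Q_3$ and therefore the definition of a quasi-random family.

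The polynomial $f$ is manifestly symmetric and of total degree $k$. To confirm the first condition of \cref{def:admissible}, I would view $f$ as a polynomial in $x_1$ with coefficients in $\F_q[x_2, \ldots, x_k]$: one has $f = (x_2 x_3 \cdots x_k)\, x_1 + 1$, so $\deg_{x_1}(f) = 1$. If $f$ were a constant multiple of $g^2$ for some $g \in \F_q[x_1, \ldots, x_k]$, then $\deg_{x_1}(f) = 2\deg_{x_1}(g)$ would be even, a contradiction.

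For the primitive condition, the expansion $f = \sum_{j=0}^{k} H_j(x_2, \ldots, x_k)\, x_1^j$ yields $H_0 = 1$, $H_1 = x_2 x_3 \cdots x_k$, and $H_j = 0$ for $j \geq 2$. Since $H_0$ is the nonzero constant polynomial $1$, it has no zeros over $\overline{\F_q}$, so the polynomials $H_0, H_1, \ldots, H_k$ share no common zero. This establishes admissibility, and applying \cref{thm:main} with $d = k$ finishes the proof.

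There is no serious obstacle in the argument: both admissibility checks are one-line observations, and all of the analytic and combinatorial work is packaged inside \cref{thm:main}. The only conceptual point worth emphasizing is that admissibility is a condition on the polynomial, not on any particular field, so once it is verified \emph{once} for the integer polynomial $x_1 \cdots x_k + 1$, the conclusion applies uniformly to every odd prime power $q$, which is what makes $\mathcal{D}_k$ a family rather than a single hypergraph.
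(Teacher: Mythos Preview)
Your proposal is correct and matches the paper's approach: the paper simply states that \cref{thm:main} ``implies the following corollary immediately,'' and you have spelled out the routine verification that $f(x_1,\ldots,x_k)=x_1\cdots x_k+1$ is admissible, which is exactly what is needed. One small wording note: admissibility in \cref{def:admissible} is technically phrased relative to $\F_q$, but your argument (that $H_0\equiv 1$ never vanishes and $\deg_{x_1}f=1$) works verbatim in every $\F_q$, so the uniformity claim stands.
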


In \cite[Section 4]{k23}, the authors asked the following question: Can we count the number of $k$-Diophantine $m$-tuples in $\F_q$ for a given odd prime power $q$? While an explicit formula is believed to be out of reach in general, an asymptotic formula for the case $k=m$ was answered in \cite[Theorem 1.7]{k23}, and the general case $m>k$ was left as an open problem. 

We resolve their question in the following corollary. 
\cref{cor:Dq} implies that the family $\mathcal{D}_k$ has the property $Q_1$, so the corollary follows immediately.

\begin{corollary}\label{cor:mtuple}
Let $k$ and $m$ be fixed positive integers with $2\leq k\leq m$. Let $q$ be an odd prime power. Then as $q \to \infty$, the number of $k$-Diophantine $m$-tuples in $\F_q$ is  
$
q^m/\big(m! \cdot 2^{\binom{m}{k}}\big)+o(q^m).
$
\end{corollary}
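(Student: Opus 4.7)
The plan is to translate the problem of counting $k$-Diophantine $m$-tuples into the problem of counting induced copies of the complete $k$-uniform hypergraph $K_m^{(k)}$ (the $k$-uniform hypergraph on $m$ vertices with all $\binom{m}{k}$ possible edges) inside $Y_{f,q}$, where $f(x_1,\ldots,x_k) = x_1 x_2 \cdots x_k + 1$, and then to invoke property $Q_1(m)$ for the quasi-random family $\mathcal{D}_k$ furnished by \cref{cor:Dq}.

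The first step is to unwind the definitions. A size-$m$ subset $A \subseteq \F_q$ is a $k$-Diophantine $m$-tuple over $\F_q$ precisely when $a_{i_1} \cdots a_{i_k} + 1$ is a square in $\F_q$ for every $k$-element subset $\{a_{i_1}, \ldots, a_{i_k}\} \subseteq A$. By the definition of $Y_{f,q}$, this is equivalent to saying that every $k$-subset of $A$ is an edge of $Y_{f,q}$, i.e., that $A$ induces the complete $k$-uniform hypergraph $K_m^{(k)}$ in $Y_{f,q}$. Hence counting $k$-Diophantine $m$-tuples in $\F_q$ reduces to counting unordered $m$-element subsets of $\F_q$ whose induced subgraph in $Y_{f,q}$ is $K_m^{(k)}$.

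Next, by \cref{cor:Dq} the family $\mathcal{D}_k$ satisfies property $Q_3$, and by the discussion immediately following \cref{def: quasirandom}, $Q_3$ is equivalent to $Q_1$; in particular, $\mathcal{D}_k$ satisfies $Q_1(m)$. Taking $G = Y_{f,q}$, $n = q$, $s = m$, and $G' = K_m^{(k)}$ in the statement of $Q_1(m)$ yields
\[
N^*_{Y_{f,q}}\bigl(K_m^{(k)}\bigr) = (1+o(1)) \, \frac{q^m}{2^{\binom{m}{k}}} \quad \text{as } q \to \infty,
\]
since $K_m^{(k)}$ has exactly $\binom{m}{k}$ edges. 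The automorphism group of $K_m^{(k)}$ is the full symmetric group $S_m$, so each unordered $m$-element subset of $\F_q$ that induces $K_m^{(k)}$ contributes exactly $m!$ labeled induced copies. Dividing by $m!$ gives the claimed count $q^m/(m! \cdot 2^{\binom{m}{k}}) + o(q^m)$.

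Once \cref{cor:Dq} is in hand, the proof has no serious technical obstacle; the whole argument amounts to a careful bookkeeping exercise. The only points requiring attention are the translation of the Diophantine condition into an induced-subgraph condition in $Y_{f,q}$, the identification of $\binom{m}{k}$ as the number of edges of $K_m^{(k)}$ (which is what produces the density factor $2^{-\binom{m}{k}}$ in $Q_1(m)$), and the conversion from labeled induced copies to unordered subsets via the $m!$ automorphism factor. The real substance of the corollary is already absorbed in \cref{thm:main} and \cref{cor:Dq}; this statement is essentially a direct translation.
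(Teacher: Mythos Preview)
Your proof is correct and follows exactly the route the paper indicates: \cref{cor:Dq} gives property $Q_3$ for $\mathcal{D}_k$, the equivalence $Q_3 \Leftrightarrow Q_1$ yields $Q_1(m)$, and applying $Q_1(m)$ to $G'=K_m^{(k)}$ then dividing by $m!$ gives the count. One small wording point: the denominator $2^{\binom{m}{k}}$ in $Q_1(m)$ comes from $\binom{s}{k}$ with $s=m$, not from the edge count of $G'$ (the formula in $Q_1(s)$ is the same for every $G'$ on $s$ vertices); for $K_m^{(k)}$ these happen to coincide, so the conclusion is unaffected.
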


Note that when $m=k$, the above corollary recovers \cite[Theorem 1.7]{k23}. We remark that the case $k=2$ in \cref{cor:mtuple} has been proved by Dujella and Kazalicki \cite[Section 6]{21DK}; however, as remarked in \cite[Section 4]{k23}, it appears that the techniques in \cite{21DK, k23} do not extend to $k$-Diophantine tuples with $k \geq 3$ and $m>k$. 

Our proof of \cref{thm:main} (and thus \cref{cor:mtuple}) only relies on Weil's bound on complete character sums (with single variable polynomial argument, see \cref{Weil}) and the main result in Chung and Graham~\cite{CG90}. However, since we are dealing with multivariate polynomials and we already have an asymptotic formula, naturally one expects that there is a power saving in the error term by appealing to more sophisticated higher dimensional versions of Weil's bound. Indeed, using a consequence of the Lang-Weil bound for geometrically
irreducible varieties \cite{LW54}, in \cref{Sec5}, we obtain the following effective version of \cref{cor:mtuple} that holds for all admissible polynomials $f$.

\begin{theorem}\label{thm:asymp}
Let $k\geq 2$ be a fixed integer. Then uniformly for all odd prime powers $q$, all admissible polynomials $f \in \F_q[x_1,x_2,\ldots,x_k]$ with degree $d\geq 1$ and all integers $m\geq k$, the number of $f$-Diophantine sets of size $m$ is 
$$
\frac{q^m}{m! \cdot 2^{\binom{m}{k}}}+O\big((2d)^{2\binom{m}{k}} q^{m-1/2}+(2d)^{13\binom{m}{k}/3}q^{m-1}\big).
$$
\end{theorem}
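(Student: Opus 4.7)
The plan is to count ordered $m$-tuples $(a_1,\ldots,a_m)\in\F_q^m$ with distinct coordinates satisfying the $f$-Diophantine condition and divide by $m!$. Let $\chi$ be the quadratic character of $\F_q$ (extended by $\chi(0)=0$); then $(1+\chi(y))/2$ indicates $y$ being a nonzero square up to a discrepancy of $1/2$ at $y=0$, and the contribution of tuples with some $f(a_{i_1},\ldots,a_{i_k})=0$ is $O(dq^{m-1})$ per edge by Schwartz--Zippel, hence $O(d\binom{m}{k}q^{m-1})$ in total. Expanding the product $\prod_{\{i_1<\cdots<i_k\}}(1+\chi(f(a_{i_1},\ldots,a_{i_k})))/2$ and switching sums gives
\[
\frac{1}{2^{\binom{m}{k}}}\sum_{S\subseteq\binom{[m]}{k}} T_S, \qquad T_S=\sum_{a\in\F_q^m}\chi(F_S(a)),\quad F_S(x)=\prod_{e\in S}f(x_e),
\]
after replacing the distinct-tuple sum by the full sum at cost $O_m(q^{m-1})$ per $S$ from the diagonals. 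The $S=\emptyset$ term yields the main term $q^m/2^{\binom{m}{k}}$, and the task reduces to bounding $T_S$ for each nonempty $S$.

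The core algebraic step is to show $F_S$ is not a nonzero constant multiple of a square in $\F_q[x_1,\ldots,x_m]$, equivalently (by Galois descent) that the hypersurface $V_S:y^2=F_S(x)$ is absolutely irreducible. Factor $f=c\prod_i p_i^{e_i}$ in $\F_q[x_1,\ldots,x_k]$ into distinct irreducibles. The primitive condition forces every $p_i$ to depend on all $k$ variables: if $p_i\in\F_q[x_2,\ldots,x_k]$, writing $f=\sum_j H_j x_1^j$ yields $p_i\mid H_j$ for every $j$, and any zero of $p_i$ in $\overline{\F_q}^{\,k-1}$ is then a common zero of the $H_j$, contradicting admissibility. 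Since $f$ is not a constant times a square, some $e_{i_0}$ is odd. Then
\[
F_S=c^{|S|}\prod_{e\in S}\prod_i p_i(x_e)^{e_i},
\]
and the factors $p_i(x_e)$ are irreducible in $\F_q[x_1,\ldots,x_m]$. For $(i,e)\ne(i',e')$ they are non-associate: when $e\ne e'$ they involve different variable subsets (each $p_i$ uses all its variables), while when $e=e'$ but $i\ne i'$ the $p_i$ and $p_{i'}$ are distinct irreducibles of $f$. Hence $p_{i_0}(x_e)$ appears in $F_S$ with odd multiplicity $e_{i_0}$, proving $F_S$ is not a constant times a square.

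Given absolute irreducibility, $V_S\subset\mathbb{A}^{m+1}$ has dimension $m$ and degree at most $2d|S|$. The Cafure--Matera effective form of the Lang--Weil bound then gives
\[
|T_S|=\bigl||V_S(\F_q)|-q^m\bigr|\le(2d|S|)^2 q^{m-1/2}+5(2d|S|)^{13/3}q^{m-1}.
\]
Summing over the $2^{\binom{m}{k}}-1$ nonempty subsets $S\subseteq\binom{[m]}{k}$, bounding $|S|\le\binom{m}{k}$, and dividing by $m!\cdot 2^{\binom{m}{k}}$ yields the claimed asymptotic with error $O\bigl(d^{2\binom{m}{k}}q^{m-1/2}+(2d)^{13\binom{m}{k}/3}q^{m-1}\bigr)$, where the exponential factor in the error absorbs the $2^{\binom{m}{k}}$ summation overhead.

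The main obstacle is the algebraic claim in the second paragraph: certifying non-square-ness of the multivariate $F_S$ requires both halves of admissibility and understanding how the irreducible factors of $f$ behave under the variable substitutions $x\mapsto x_e$. Once this is in place, the Cafure--Matera bound plugs in directly and the remaining bookkeeping is routine.
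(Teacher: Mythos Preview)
Your approach is essentially the same as the paper's: reduce to a character sum, verify that the products $F_S=\prod_{e\in S}f(x_e)$ are never a constant times a square, and invoke the Cafure--Matera effective Lang--Weil bound. The paper packages the last two steps by citing Slavov's lemma (\cref{lem:LW}) as a black box, whereas you expand the product over $S$ and apply Cafure--Matera to each hypersurface $y^2=F_S$ individually. For the non-square verification, the paper argues by contradiction via $\gcd(f_J,f_{J'})$ being a factor of $f$ in fewer than $k$ variables; your factorization argument (every irreducible factor of $f$ depends on all $k$ variables, hence the $p_i(x_e)$ are pairwise non-associate) is an equivalent and arguably cleaner route to the same consequence of the primitive condition.

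One small bookkeeping point: your claim that ``the exponential factor in the error absorbs the $2^{\binom{m}{k}}$ summation overhead'' does not quite hold when $d=1$, since then $d^{2\binom{m}{k}}=1$ has no exponential growth to absorb the $\sum_S|S|^2\asymp\binom{m}{k}^2 2^{\binom{m}{k}}$ arising from your per-$S$ bound $(2d|S|)^2$. Applying Cafure--Matera once to the complete intersection $\{y_I^2=f_I:\;I\in\mathcal I\}$ (as Slavov does) rather than separately to each $V_S$ avoids this summation and yields the stated exponent directly.
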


We remark that Shparlinski~\cite{Shparlinski23} showed that the error term in \cref{thm:asymp} can be improved to $O_m(q^{m-1})$ when $k=2$ and $f(x_1,x_2)=x_1x_2+r$ for $r\in \F_q^*$. In addition, very recently, Gu \cite{Gu26} extended Shparlinski's method to polynomials in several variables of the form $f(x_1,x_2,\ldots, x_k)=x_1x_2\cdots x_k+r$, obtaining the same error term. 
We also note that the main result of \cite{Gu26} establishes a special case of \cref{thm:asymp}, namely for admissible polynomials of the form $g(x_1x_2\cdots x_k)$, where $g$ is a univariate polynomial. Gu further remarked that his method does not appear to extend to general admissible polynomials.

It is well-known that a random $k$-graph $G_k(n, 1/2)$ has clique number, the number of vertices of the largest induced complete subgraph of $G_k(n, 1/2)$, of the order $(\log n)^{1/(k-1)}$ almost surely. In view of \cref{thm:main}, whenever $f \in \F_q[x_1,x_2,\ldots, x_k]$ is admissible with degree $d$, since the set of squares in $\F_q$ ``behaves like" a random subset of $\F_q$ with density $1/2$, one expects that $\omega(Y_{f,q})$, the clique number of $Y_{f,q}$, equivalently, the largest size of $f$-Diophantine sets over $\F_q$, to be at least $c_{d,k}(\log q)^{1/(k-1)}$, where $c_{d,k}$ is a positive constant depending only on $d$ and $k$. When $k=2$, this heuristic has been confirmed in \cite[Theorem 1.4 and Theorem 1.6]{KYY24b} by introducing a quantitative way to measure the quasi-randomness for graphs. However, studying cliques in hypergraphs is usually much harder compared to that in graphs, and it seems challenging to extend the method in \cite{KYY24b} to hypergraphs. 

\cref{thm:asymp} readily provides the following lower bound on the clique number of $Y_{f,q}$, which, while not necessarily optimal, represents an improvement on some previous results.

\begin{corollary}
If $k\geq 2$, $q$ is an odd prime power, and $f \in \F_q[x_1,x_2,\ldots,x_k]$ is an admissible polynomial with degree $d\geq 1$, then $\omega(Y_{f,q})\geq c_k(\log_{\widetilde{d}} q)^{1/k}$, where $\widetilde{d}=\max(2,d)$, and $c_k$ is a positive constant only depending on $k$. 
\end{corollary}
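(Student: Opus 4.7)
The plan is to extract the corollary directly from \cref{thm:asymp} by choosing $m$ as large as possible while keeping the main term $q^m/(m!\cdot 2^{\binom{m}{k}})$ strictly larger than the error term. Since the existence of at least one $f$-Diophantine set of size $m$ immediately yields $\omega(Y_{f,q})\geq m$, it suffices to find the largest $m$ for which the count in \cref{thm:asymp} is positive.

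First I would compare the main term with the first error term $d^{2\binom{m}{k}}q^{m-1/2}$. Positivity requires, up to an absolute constant, $q^{1/2}>m!\cdot (2d^2)^{\binom{m}{k}}$. Taking logarithms base $\widetilde{d}=\max(2,d)$ and using the crude bounds $\log_{\widetilde{d}}(2d^2)\leq 3$ and $\binom{m}{k}\leq m^k/k!$, this reduces to
\[
\tfrac12\log_{\widetilde{d}} q \;>\; m\log_{\widetilde{d}} m+\tfrac{3}{k!}\,m^k,
\]
which holds for $m\leq c_k(\log_{\widetilde{d}} q)^{1/k}$ provided $c_k^k<k!/6$, since the $m\log_{\widetilde{d}} m$ term is negligible compared to $m^k$ as $m\to\infty$. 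Next I would handle the second error term $(2d)^{13\binom{m}{k}/3}q^{m-1}$: positivity requires $q>m!\cdot 2^{\binom{m}{k}}(2d)^{13\binom{m}{k}/3}$, and since $\log_{\widetilde{d}}(2d)\leq 2$ in both cases $d=1$ and $d\geq 2$, the same kind of estimate yields a constraint of the form $\log_{\widetilde{d}} q\gtrsim m^k$, again of the shape $m\leq c_k'(\log_{\widetilde{d}} q)^{1/k}$ for a suitable constant $c_k'>0$.

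Taking $c_k=\min(c_k',c_k'')$ sufficiently small (depending only on $k$) and setting $m=\lfloor c_k(\log_{\widetilde{d}} q)^{1/k}\rfloor$, both error terms are strictly smaller than half the main term, so the count of $f$-Diophantine sets of size $m$ is positive. For small $q$ where this asymptotic is vacuous, the conclusion can be secured by shrinking $c_k$ further, since the bound $\omega(Y_{f,q})\geq k$ is trivial in the admissible range once $q$ is large enough to have any edges at all (one may also absorb this into $c_k$ by noting that $(\log_{\widetilde{d}} q)^{1/k}$ is bounded for bounded $q$).

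The only delicate point, and the one to check carefully, is the interplay between $m$ and $d$ inside $\binom{m}{k}$, because both grow with $q$: the exponent $\binom{m}{k}$ is of order $m^k$, which is precisely what forces the $1/k$-th power in the final bound. No new ideas beyond \cref{thm:asymp} are needed; the entire proof is a bookkeeping exercise comparing the main term to the two error terms and solving the resulting transcendental inequality for $m$ in terms of $\log_{\widetilde{d}} q$.
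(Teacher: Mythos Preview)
Your approach is correct and is exactly what the paper intends: the paper does not spell out a proof of this corollary but simply says that \cref{thm:asymp} ``readily provides'' it, and the only natural way to extract the bound is precisely the comparison of main term versus error terms that you carry out. Your logarithmic bookkeeping (bounding $\log_{\widetilde d}(2d^2)\le 3$ and $\log_{\widetilde d}(2d)\le 2$, and using $\binom{m}{k}\le m^k/k!$) is the standard route and gives the claimed shape $m\asymp(\log_{\widetilde d}q)^{1/k}$.

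Two small cosmetic points. First, you write ``$c_k=\min(c_k',c_k'')$'' but only introduced $c_k$ and $c_k'$ earlier; presumably the two constants coming from the two error-term comparisons were meant to be labeled $c_k'$ and $c_k''$. Second, for the small-$q$ regime you invoke ``$\omega(Y_{f,q})\ge k$ is trivial once there are edges''; it is cleaner to note that any $(k-1)$-subset of $\F_q$ is vacuously an $f$-Diophantine set, so $\omega(Y_{f,q})\ge k-1$ unconditionally whenever $q\ge k-1$, and then shrink $c_k$ so that $c_k(\log_{\widetilde d}q)^{1/k}\le k-1$ for all $q$ below the threshold where the asymptotic kicks in (this is uniform in $d$ since $\log_{\widetilde d}q\le\log_2 q$). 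Finally, your closing remark that ``both grow with $q$'' is slightly misleading: $d$ is fixed with $f$; what grows is $\binom{m}{k}\sim m^k/k!$ as a function of your chosen $m$, and that is indeed the mechanism forcing the $1/k$-th power.
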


We remark that if $f$ is a sparse polynomial, a better lower bound on $\omega(Y_{f,q})$ was proved in \cite{YY25} based on some explicit algebraic constructions. More precisely, under some additional structural assumptions on $f$, it is proved in \cite[Theorem 1.1]{YY25} that $\omega(Y_{f,q})\geq (1-o(1))\frac{1}{d}(\log_4 q)^{1/\ell(f)}$, where $\ell(f)$ is the number of non-constant monomials in the monomial expansion of $f$ and $d$ is the degree of $f$. In particular, $\omega(Y_{f,q})\geq (\frac{1}{k}-o(1))\log_4 q$ when $f=x_1x_2\ldots x_k+1$. However, note that the parameter $\ell(f)$ is much larger compared to $k$ for a generic polynomial $f$, and thus the above corollary refines this result for almost all admissible polynomials.

\medskip

\textbf{Notations.} We follow the standard asymptotic notations.  We use the standard big $O$ notation, meaning the implicit constant is absolute. $d$ always stands for the degree of a polynomial, and $O_d$ means the implicit constant only depends on the degree $d$ (but not on the polynomial itself).

\section{More backgrounds and motivations}\label{sec:background}
In this section, we give more backgrounds and motivations for defining admissible polynomials (\cref{def:admissible}). 

We first recall the following fundamental result, which is a special case of the Schwartz–Zippel lemma \cite[Corollary 1]{S80} applied to finite fields. We will apply the lemma repeatedly in the paper.
\begin{lemma}[Schwartz–Zippel lemma] \label{SZ}
Let $d\geq 1$ and let $g\in \overline{\F_q}[x_{1},x_{2},\ldots,x_{m}]$ be a polynomial with degree $d$. Then the number of $(u_1, u_2, \ldots, u_m)\in \F_q^m$ such that $g(u_{1},u_{2},\ldots ,u_{m})=0$ is at most $dq^{m-1}$.
\end{lemma}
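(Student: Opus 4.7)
The plan is to prove the lemma by induction on the number of variables $m$, following the classical Schwartz--Zippel argument.

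For the base case $m = 1$, a nonzero polynomial $g$ of degree at most $d$ in a single variable has at most $d$ roots in $\overline{\F_q}$, hence at most $d = d \cdot q^{0}$ roots in $\F_q$.

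For the inductive step, I would expand $g$ as a polynomial in $x_1$ with coefficients in $\overline{\F_q}[x_2, \ldots, x_m]$:
$$g(x_1, x_2, \ldots, x_m) = \sum_{j=0}^{D} H_j(x_2, \ldots, x_m)\, x_1^j,$$
where $D$ is the largest index for which $H_D$ is not identically zero. Since the total degree of $g$ is at most $d$, the polynomial $H_D$ has total degree at most $d - D$ in the remaining $m-1$ variables. Next, I would split the count of $\F_q$-rational zeros $(u_1, u_2, \ldots, u_m)$ of $g$ according to whether $H_D(u_2, \ldots, u_m)$ vanishes. In the first case, the induction hypothesis applied to the nonzero polynomial $H_D$ (of degree at most $d-D$ in $m-1$ variables) gives at most $(d-D)q^{m-2}$ tuples $(u_2, \ldots, u_m)$, and each admits at most $q$ values of $u_1$, contributing at most $(d-D)q^{m-1}$ zeros. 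In the second case, $g(x_1, u_2, \ldots, u_m)$ is a nonzero univariate polynomial of degree exactly $D$, so at most $D$ values of $u_1$ work; ranging over all $(u_2, \ldots, u_m) \in \F_q^{m-1}$ produces at most $D q^{m-1}$ zeros. Summing the two contributions yields the desired bound $d q^{m-1}$, closing the induction.

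No genuine obstacle is anticipated, since this is a classical result with a short proof. The only minor subtlety is that we count $\F_q$-rational zeros of a polynomial whose coefficients lie in $\overline{\F_q}$, but the induction is set up over the same algebraic closure, so this poses no difficulty.
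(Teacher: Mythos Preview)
Your argument is correct; it is the standard inductive proof of the Schwartz--Zippel bound, and the minor point you flag about coefficients lying in $\overline{\F_q}$ while zeros are counted in $\F_q^m$ is handled correctly by your setup. The paper, however, does not give its own proof of this lemma at all: it simply states the result and cites \cite[Corollary 1]{S80}, so there is no in-paper argument to compare your approach against.
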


\begin{remark}\label{rem:far}
Assume that $f(x_1,x_2, \ldots, x_k)$ is a constant multiple of the square of a polynomial. 
In this case, $Y_{f,q}$ is far from being quasi-random. Say $f=cg^2$, where $c\in \F_q$ and $g \in \F_q[x_1,x_2,\ldots, x_k]$. If $c$ is a square in $\F_q$, then by definition, $Y_{f,q}$ is complete. On the other hand, if $c$ is a non-square in $\F_q$, then $k$ distinct vertices $\{v_1,v_2,\ldots, v_k\}$ form an edge in $Y_{f,q}$ if and only if $f(v_1,v_2, \ldots, v_k)=0$. Thus, by the Schwartz–Zippel lemma, the number of edges of $Y_{f,q}$ is $O_d(q^{k-1})$ and thus $Y_{f,q}$ is sparse.
\end{remark}

Next, we give two remarks related to the primitive condition in the definition of admissible polynomials.
\begin{remark}\label{rem:primitivekernel}

     A polynomial $f(x,y) \in \F_q[x,y]$ can be expressed as
     $$
     f(x,y)=\sum_{i=0}^m G_i(y)x^i=\sum_{j=0}^n H_j(x)y^j.
     $$
     Following \cite{GS08}, we say $f$ is \emph{primitive in $x$} if $\gcd(G_0(y),\ldots, G_m(y))=1$, and \emph{primitive in $y$} if $\gcd(H_0(x),\ldots, H_n(x))=1$.     
     In \cite{GS08}, Gyarmati and S\'{a}rk\"{o}zy proved that any two-variable polynomial $f(x,y)\in \mathbb{F}_q[x,y]$ has a factorization, which is unique up to constant factors, such that $f(x,y)=F(x)G(y)H(x,y)$, where $H(x,y)$ is the \emph{primitive kernel} of $f(x,y)$ in the sense that $H(x,y)$ is primitive in both $x$ and $y$.

     Next, we show that our primitive condition in \cref{def:admissible} extends their primitivity notion to polynomials with $k\geq 2$ variables. Recall we say that $f \in \F_q[x_1,x_2,\ldots, x_k]$ satisfies the primitive condition provided that when we write $f(x_1,x_2,\ldots , x_k)=\sum_{j=0}^{d}H_{j}(x_2,x_3,\ldots, x_{k})x_1^j,$
   the polynomials $H_0,H_1, H_2, \ldots, H_d$ do not share a common zero over $\overline{\F_q}$. Since we work on symmetric polynomials, the same condition holds if we expand $f$ based on each $x_i$. Note that if $k=2$, this primitive condition is equivalent to $\gcd(H_0, H_1, \ldots, H_d)=1$, which is consistent with the above notion of primitivity of $2$-variable polynomials. 
   
   However, in general, when $k\geq 3$, the condition $\gcd(H_0, H_1, \ldots, H_d)=1$ is not equivalent to the condition that $H_0,H_1, H_2, \ldots, H_d$ do not share a common zero over $\overline{\F_q}$. For example, if $f(x_1,x_2,x_3)=x_1x_2+x_2x_3+x_3x_1=H_1(x_2,x_3)x_1+H_2(x_2,x_3)$ with $H_1(x_2,x_3)=x_2+x_3$ and $H_2(x_2,x_3)=x_2x_3$, then $\gcd(H_1, H_2)=1$, however, $H_1$ and $H_2$ share the common root $x_2=x_3=0$. The same example also shows that the factorization result mentioned above does not extend in general to the $3$ or more variables case: the polynomial $f(x_1,x_2,x_3)$ does not satisfy the primitive condition, however, it is irreducible over $\mathbb{F}_q$.

\end{remark}
\begin{remark}\label{rem:almostall}
    One can prove that almost all multivariable symmetric polynomials of any degree are admissible over $\mathbb{F}_q$. We briefly illustrate this for the case of $3$-variable polynomials of degree $2$. We can write any symmetric $f(x,y,z)$ of degree $2$ as 
\beq
\label{deg2-var3-case}
f(x,y,z)=Ax^2+Ay^2+Az^2+Bxy+Byz+Bxz+Cx+Cy+Cz+D.
\eeq 
We let $F_0=F_0(y,z)=D+Cy+Cz+Byz+Ay^2+Az^2$ and $F_1=F_1(y,z)=By+Bz+C$ so that
\[f(x,y,z)=Ax^2+F_1(y,z)x+F_0(y,z).\]
 When $A\neq 0$, we can guarantee that $f(x,y,z)$ satisfies the primitive condition. Now we assume $A=0$. Then we have 
    $f(x,y,z)=F_0(y,z)+F_1(y,z)x$,
    and we just need to determine the cases when $F_0$ and $F_1$ share common zeros or not to determine the primitivity of $f(x,y,z)$. That is to say, whether their resultant is zero or not. The resultant of $F_0$ and $F_1$ in terms of the variable $y$ is $$\textrm{Res}(F_0,F_1,y)=\det \begin{bmatrix}
Bz+C & B \\
D+Cz & C+Bz 
\end{bmatrix}  = B^2z^2+BCz+C^2-BD.$$
Hence $\textrm{Res}(F_0,F_1,y)$ has no roots in $\overline{\F_q}$ if and only if $B=0$ and $C^2-BD\neq 0$, thus if and only if $B=0$, and $C\neq 0$ (If $C=0,D=0$, then the resultant is zero when $z=0$ for any $B$. This is the case of $xy+yz+zx$ in \cref{rem:primitivekernel}.). Hence, the probability of a randomly chosen polynomial \cref{deg2-var3-case} satisfying the primitive condition is
$$
\frac{(q-1)q^3 + (q-1)q}{q^4} \to 1 \quad \text{as  $\quad q\to \infty$.}
$$
On the other hand, it is easy to show that the probability of a randomly chosen polynomial \cref{deg2-var3-case} being a constant multiple of the square of a polynomial in $\F_q[x,y,z]$ tends to $0$ as $q\to \infty$.

The same argument shows that if $d$ and $k$ are fixed, then almost all symmetric polynomials in $\F_q[x_1,x_2,\ldots, x_k]$ with degree $d$ are admissible, as $q\to \infty$. Indeed, to see that almost all such polynomials satisfy the primitive condition, it suffices to consider the coefficient for $x_1^d$.
\end{remark}

\section{A key proposition}\label{sec:keyprop}
The main purpose of this section is to prove \cref{prop:Y}, which will play a key role in \cref{Sec4}.
Our proof adapts the approach of Gyarmati and S\'{a}rk\"{o}zy \cite[Section 6]{GS08}, extending their results from the $2$-variable case to the $k$-variable case for any $k\geq 2$. 
\begin{proposition}\label{prop:Y}
Let $q$ be odd prime power and $k\geq 2$. Assume that $f(x_1,x_2, \ldots, x_k)\in \F_q[x_1,x_2,\ldots, x_k]$ is admissible with degree $d$. 
Denote by $X$ the set of tuples $( u_{2},u_{3},\ldots,u_{k})\in \F_q^{k-1}$ with $f(x_1,u_{2},u_{3},\ldots,u_{k})=ch^2(x_1)$ for some $c\in \F_q$ and monic $ h(x_1) \in \F_q[x_1]$.
Then we have 
$|X|\leq (d^2+d)q^{k-2}.$
\end{proposition}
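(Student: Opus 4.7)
The plan is to construct a non-zero polynomial $R \in \F_q[x_2, \ldots, x_k]$ of total degree at most $d^2 + d$ whose zero locus contains $X$, and then apply the Schwartz--Zippel lemma (\cref{SZ}). Writing $y = (x_2, \ldots, x_k)$ for brevity, the primitive condition guarantees that $f(x_1, u) \not\equiv 0$ for every $u \in X$ (otherwise all $H_j(u)$ would vanish simultaneously), so $f(x_1, u) = c\, h(x_1)^2$ with $c \neq 0$, and the degree $e(u) := \deg h$ is well-defined. I would split $X = X_0 \sqcup X_{\ge 1}$ according to whether $e(u) = 0$ and handle each piece via a different algebraic invariant.

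For $X_0$, the condition $e(u) = 0$ means $f(x_1, u)$ is constant in $x_1$, forcing $H_j(u) = 0$ for all $j \ge 1$. At least one $H_{j_0}$ with $j_0 \ge 1$ must be a non-zero polynomial: otherwise $f = H_0(y)$, and primitivity would force $H_0$ to have no zero in $\overline{\F_q}^{k-1}$, hence to be a non-zero constant, contradicting $\deg f \ge 1$. Thus $X_0 \subseteq V(H_{j_0})$ with $\deg H_{j_0} \le d - 1$. For $X_{\ge 1}$, the specialized polynomial $f(x_1, u)$ has a multiple root in $\overline{\F_q}$, so the discriminant $D := \operatorname{Disc}_{x_1}(f) \in \F_q[y]$ vanishes at $u$. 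Using the weighted-homogeneity of the discriminant (weight $d - i$ attached to the coefficient $H_i$, total weight $d(d-1)$) together with the bound $\deg_y H_i \le d - i$ coming from $\deg f \le d$, one obtains $\deg D \le d(d-1)$. Provided $D \not\equiv 0$, the product $R := D \cdot H_{j_0}$ is a non-zero polynomial of degree at most $d(d-1) + (d-1) = d^2 - 1 \le d^2 + d$ that vanishes on $X$, and \cref{SZ} yields the claimed bound.

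The main obstacle is the remaining case $D \equiv 0$, which occurs precisely when $f$ has a repeated factor as a polynomial in $x_1$ over $\F_q(y)$. I would handle this by induction on $d$: by Gauss's lemma, write $f = g(x_1, y)^2\, k(x_1, y)$ in $\F_q[x_1, y]$ with $k$ squarefree in $x_1$. Admissibility of $f$ forces $k$ to be non-constant (otherwise $f$ is a constant times a square), and the primitive condition is inherited by $k$: a common zero $u$ of the coefficients of $k$ would make $k(x_1, u) \equiv 0$, hence $f(x_1, u) \equiv 0$, yielding a common zero of the $H_j$'s. Since for $u \in X$ we automatically have $g(x_1, u) \not\equiv 0$ (as $f(x_1, u) \neq 0$), the identity $f(x_1, u) = c\, h(x_1)^2$ is equivalent to $k(x_1, u)$ being a constant times a square, so $X$ for $f$ equals the analogous set for $k$. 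Since $\deg k \le d - 2$, the induction hypothesis gives $|X| \le ((d-2)^2 + (d-2))\, q^{k-2} \le (d^2 + d)\, q^{k-2}$. The central technical points are verifying the inheritance of primitivity by $k$ and the precise weighted-degree bound on the discriminant.
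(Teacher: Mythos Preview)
Your discriminant approach has a genuine gap in positive characteristic. The assertion that ``$D\equiv 0$ occurs precisely when $f$ has a repeated factor as a polynomial in $x_1$ over $\F_q(y)$'' is the characteristic-zero picture; over a field of characteristic $p>0$ a squarefree polynomial can have identically vanishing discriminant whenever it possesses an inseparable irreducible factor. A concrete admissible counterexample (with $k=2$) is
\[
f(x_1,x_2)=(x_1^{p}-x_2)(x_2^{p}-x_1)
\]
over any $\F_q$ of characteristic $p$. It is symmetric, its leading $x_1$-coefficient is the nonzero constant $-1$ (so the primitive condition holds trivially), and it is a product of two distinct irreducibles (so it is not a constant times a square). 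Over $\F_q(x_2)[x_1]$ it is squarefree: $x_1^{p}-x_2$ is irreducible because $x_2\notin\F_q(x_2)^{p}$, and it is coprime to the linear factor $x_2^{p}-x_1$. Nevertheless $\partial_{x_1}f=-(x_1^{p}-x_2)$ shares a nonconstant factor with $f$, so $D=\operatorname{Disc}_{x_1}(f)\equiv 0$. In your reduction ``write $f=g^{2}k$ with $k$ squarefree in $x_1$'' this forces $g$ to be constant and $k=f$, so $\deg k=d$ and the induction stalls. (A smaller, fixable issue: your squarefree part $k$ need not be symmetric, so the induction hypothesis, stated for admissible polynomials, does not literally apply to it.)

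The paper avoids separability questions altogether. It splits $X$ according to whether the leading $x_1$-coefficient $p_n$ vanishes, and on $Z=\{p_n\neq 0\}$ it \emph{solves} for the coefficients of the putative monic square root: one proves recursively that for every $u\in Z$ the coefficients $a_{r-j}(u)$ of $h_u$ satisfy $a_{r-j}(u)\,p_n(u)^{j}=q_{r-j}(u)$ for explicit polynomials $q_{r-j}\in\F_q[y]$ of degree at most $jd$. Packaging these into a single $h(x_1,y)$ gives $p_n^{\,n-1}f-h^{2}$ vanishing on all of $Z$; admissibility then forces this polynomial to be not identically zero, and Schwartz--Zippel yields $|Z|\le nd\,q^{k-2}$. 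This construction is purely algebraic and works uniformly in every odd characteristic.
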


Write  
\begin{equation}\label{eq:f}
f(x_1,x_2,\ldots,x_k)=p_n(x_2,\ldots,x_k)x_1^n+\cdots+p_1(x_2,\ldots,x_k)x_1+p_0(x_2,\ldots,x_k),    
\end{equation} 
where $p_n$ is not identically zero.
Note that $n \geq 1$; otherwise, $f(x_1,x_2, \ldots, x_k)$ is a constant polynomial, violating the assumption. We define 
\begin{align*}
Y&=\{(u_{2},\ldots,u_{k}) \in X \mid p_n(u_{2},\ldots,u_{k})=0\}, \quad \text{and} \\
Z&=\{ (u_{2},\ldots,u_{k}) \in 
X \mid p_n(u_{2},\ldots,u_{k}) \ne 0\}.
\end{align*}
Then $|X|=|Y|+|Z|$.
By the Schwartz–Zippel lemma, we have $|Y| \le (d-n)q^{k-2}\le dq^{k-2}$. 
We now proceed to estimate $|Z|$.
The following lemma completes the proof of \cref{prop:Y}.

\begin{lemma} \label{bound-Y4}
   We have $|Z|\leq ndq^{k-2}.$ In particular, $|Z|\leq d^2q^{k-2}$.
\end{lemma}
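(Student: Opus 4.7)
The plan is to extract, for each $(u_2, \ldots, u_k) \in Z$, a family of polynomial identities in $\F_q[x_2, \ldots, x_k]$ that must vanish at $(u_2, \ldots, u_k)$; then to use admissibility of $f$ to argue that at least one such polynomial is not identically zero; and finally to apply the Schwartz--Zippel lemma (\cref{SZ}).

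First, I would reduce to the case $n$ even, since otherwise $p_n(u_2,\ldots,u_k) \ne 0$ forces $f(x_1, u_2, \ldots, u_k)$ to have odd degree in $x_1$, which cannot equal $c h(x_1)^2$; hence $Z = \emptyset$. Writing $n = 2m$ and taking $h$ monic of degree $m$, I would compare coefficients of $x_1^j$ in the identity $f(x_1, u_2, \ldots, u_k)/p_n(u_2, \ldots, u_k) = h(x_1)^2$. The comparisons for $j = n, n-1, \ldots, m$ recursively solve for the coefficients of $h$ as rational functions in $p_m, \ldots, p_n$ with denominators powers of $p_n$; the remaining comparisons for $j = m-1, m-2, \ldots, 0$ then give $m$ additional identities. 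Clearing denominators by multiplying through by $p_n^{n-j-1}$, each of these becomes a polynomial identity $\mathcal{C}_j(x_2, \ldots, x_k) = 0$ with $\mathcal{C}_j \in \F_q[x_2, \ldots, x_k]$. A routine degree bookkeeping using $\deg p_i \le d - i$ shows $\deg \mathcal{C}_j \le nd$ for every $j \in \{0, 1, \ldots, m-1\}$, and by construction $\mathcal{C}_j(u_2, \ldots, u_k) = 0$ for all $(u_2, \ldots, u_k) \in Z$.

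The main step is to show that at least one $\mathcal{C}_j$ is not identically zero. Suppose otherwise: then $f/p_n = h(x_1)^2$ holds as an identity in $\F_q(x_2, \ldots, x_k)[x_1]$, and clearing the denominator yields $p_n^{n-1} f = \tilde{h}^2$ in $\F_q[x_1, x_2, \ldots, x_k]$ for some $\tilde{h}$. Since $n-1$ is odd, unique factorization forces every irreducible factor of $f$ with positive $x_1$-degree to occur with even multiplicity, whence $f = \tilde{p}(x_2, \ldots, x_k) \cdot h_1(x_1, x_2, \ldots, x_k)^2$ for some $\tilde{p} \in \F_q[x_2, \ldots, x_k]$ and some $h_1 \in \F_q[x_1, \ldots, x_k]$. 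If $\tilde{p}$ is a constant, then $f$ is a constant multiple of a square, contradicting the first admissibility condition. Otherwise $\tilde{p}$ is nonconstant and has a zero $(u_2^*, \ldots, u_k^*) \in \overline{\F_q}^{k-1}$, at which every coefficient $H_j$ of $f$ in $x_1$ vanishes, contradicting the primitive condition.

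Finally, choosing any $\mathcal{C}_{j^*} \not\equiv 0$ of degree at most $nd$, the Schwartz--Zippel lemma yields $|Z| \le |\{(u_2, \ldots, u_k) \in \F_q^{k-1} : \mathcal{C}_{j^*}(u_2, \ldots, u_k) = 0\}| \le nd \cdot q^{k-2}$, which is the claimed bound; the weaker estimate $|Z| \le d^2 q^{k-2}$ then follows from $n \le d$. I expect the main obstacle to be the argument in the previous paragraph, where both admissibility conditions must be used in tandem to rule out the simultaneous vanishing of all $\mathcal{C}_j$; the degree bookkeeping, while tedious, is entirely mechanical.
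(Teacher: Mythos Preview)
Your proposal is correct and follows essentially the same approach as the paper: recursively solve for the coefficients of $h$ (your rational-function formulation is the paper's \cref{claim1} with denominators not yet cleared), produce polynomial constraints of degree at most $nd$ vanishing on $Z$, rule out their simultaneous vanishing via the identity $p_n^{n-1}f=\tilde h^2$ together with both admissibility conditions, and finish with Schwartz--Zippel. The only cosmetic difference is that the paper writes $f=f_1f_2^2$ with $f_1$ squarefree and deduces $f_1\mid p_n^{n-1}$, whereas you argue directly that irreducible factors of $f$ with positive $x_1$-degree have even multiplicity; these are equivalent. (A tiny remark: your clause ``since $n-1$ is odd'' is not actually the reason that step works---what matters is that an irreducible $\pi$ with positive $x_1$-degree cannot divide $p_n$---but the conclusion stands regardless.)
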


\begin{proof}
If $n$ is odd, then clearly $Z=\emptyset$ and we are done. Next, assume that $n$ is even. 

If $(u_{2,1},u_{3,1},\ldots,u_{k,1}),\ldots ,(u_{2,T},u_{3,T},\ldots,u_{k,T})$ are all the distinct elements of $Z$, then for each $i=1,\ldots , T$, the polynomials $f(x_1,u_{2,i},u_{3,i},\ldots,u_{k,i})$ can be written in the form of 
\[f(x_1,u_{2,i},u_{3,i},\ldots,u_{k,i})=C_ih^2_i(x_1),\quad \text{with $C_i\in\F_q$ and monic $h_i(x_1)\in \F_q[x_1].$}\]
Note that $C_i=p_n(u_{2,i},u_{3,i},\ldots,u_{k,i})$ and hence 
\begin{equation}\label{eq:fph}
f(x_1,u_{2,i},u_{3,i},\ldots,u_{k,i})=p_n(u_{2,i},u_{3,i},\ldots,u_{k,i})h^2_i(x_1).    
\end{equation}
We denote 
\begin{equation}\label{eq:h_i}
h_i(x_1)=a_r(i)x_1^r+a_{r-1}(i)x_1^{r-1}+\cdots + a_1(i)x_1 + a_0(i),
\end{equation}
where $a_r(i)=1$, $a_{r-j}(i)\in \F_q$, and $r=\frac{n}{2}$. 

\begin{claim}\label{claim1}
For each $0\leq j \leq r$, there exists a polynomial $q_{r-j}(x_2,x_3,\ldots,x_k)\in \F_q[x_2,x_3,\ldots,x_k]$ with degree at most $jd$, such that 
\begin{equation}\label{eq:ar-j}
    a_{r-j}(i)(p_n(u_{2,i},u_{3,i},\ldots,u_{k,i}))^j=q_{r-j}(u_{2,i},u_{3,i},\ldots,u_{k,i})
\end{equation}
holds for $1\leq i \leq T$. 
\end{claim}

\begin{poc}
We prove the claim by induction. The case $j=0$ is obvious since we can simply set $q_{r} \equiv 1$. Let $1\leq j\leq r$ and assume that the claim holds for $0,1, \ldots j-1$. The coefficient of $x_1^{n-j}$ in $p_n(u_{2,i},u_{3,i},\ldots,u_{k,i})^j h^2_{i}(x_1)$ is
\begin{small}
\begin{align*}
&(p_n (u_{2,i},\ldots,u_{k,i}))^j\left(2a_{r-j}(i) +\sum_{\substack{0\leq t_0,\ldots t_{j-1} \\ t_0+\cdots +t_{j-1}=2 \\ t_1+2t_2+\cdots +(j-1)t_{j-1}=j}} \frac{2}{t_0! \ldots t_{j-1}! } a_{r-1}(i)^{t_1}\cdots a_{r-(j-1)}(i)^{t_{j-1}}\right)\\
&=2 a_{r-j}(i) (p_n(u_{2,i},\ldots,u_{k,i}))^j \\
&+\sum_{\substack{0\leq t_0,\ldots t_{j-1} \\ t_0+\cdots +t_{j-1}=2 \\ t_1+2t_2+\cdots +(j-1)t_{j-1}=j}} \frac{2}{t_0! \ldots t_{j-1}! } (a_{r-1}(i)p_n(u_{2,i},\ldots,u_{k,i}))^{t_1}\cdots (a_{r-(j-1)}(i)p_n(u_{2,i},\ldots,u_{k,i})^{j-1})^{t_{j-1}}, 
\end{align*}
\end{small}
which, by the inductive hypothesis, is of the form
\[2 a_{r-j}(i) (p_n(u_{2,i},u_{3,i},\ldots,u_{k,i}))^j+s_j(u_{2,i},u_{3,i},\ldots,u_{k,i}),\]
where $s_j(x_2,x_3,\ldots, x_k)\in \F_q[x_2,x_3,\ldots, x_k]$ is a polynomial (independent of $i$) of degree $\leq jd$. Multiplying $(p_n(u_{2,i},u_{3,i},\ldots,u_{k,i}))^{j-1}$ on both sides of equation~\eqref{eq:fph} and comparing the coefficient of $x_1^{n-j}$ , we have the equality 
\begin{align*}
p_{n-j}(u_{2,i},\ldots,u_{k,i})(p_n(u_{2,i},\ldots,u_{k,i}))^{j-1}
=2 a_{r-j}(i) (p_n(u_{2,i},\ldots,u_{k,i}))^j+s_j(u_{2,i},\ldots,u_{k,i}),  
\end{align*}
which implies
\beqn
a_{r-j}(i) (p_n(u_{2,i},\ldots,u_{k,i}))^j =\frac{1}{2}(p_{n-j}(u_{2,i},\ldots,u_{k,i})(p_n(u_{2,i},\ldots,u_{k,i}))^{j-1}-s_j(u_{2,i},\ldots,u_{k,i})).  
\eeqn
Thus, we can set $$q_{r-j}(x_2,\ldots,x_k)=\frac{1}{2}\big(p_{n-j}(x_2,\ldots,x_k)(p_n(x_2,\ldots,x_k))^{j-1}-s_j(x_2,\ldots,x_k)\big),$$
with degree at most $dj$ such that equation~\eqref{eq:ar-j} holds, as required.
\end{poc}
We define
\begin{align*}
h & (x_1,x_2,\ldots, x_k)\\ & =(p_n(x_2,\ldots, x_k))^r x_1^r +q_{r-1}(x_2,\ldots, x_k)(p_n(x_2,\ldots, x_k))^{r-1}x_1^{r-1}+  \cdots + q_0(x_2,\ldots, x_k),
\end{align*}
where the polynomials $q_{r-j}$'s are from \cref{claim1} and the degree of $q_{r-j}$ is at most $jd$. Then by equations \eqref{eq:h_i} and 
\eqref{eq:ar-j}, we have
$$
(p_n(u_{2,i},\ldots,u_{k,i}))^r h_i(x_1)= h(x_1,u_{2,i},\ldots,u_{k,i})
$$
for $1\leq i \leq T$. 
It then follows from equation~\eqref{eq:fph} that 
\begin{equation}\label{eq:fph2}
(p_n(u_{2,i},\ldots,u_{k,i}))^{n-1}f(x_1,u_{2,i},\ldots,u_{k,i})=h^2(x_1,u_{2,i},\ldots,u_{k,i})  
\end{equation}
holds for each $1\leq i \leq T$. 

Let $1\le \ell \le n$. From the definition of $h(x_1,x_2,\ldots, x_k)$ and~\eqref{eq:fph2}, we can deduce that  the coefficient of $x_1^{n-\ell}$ of $(p_n(u_{2,i},\ldots,u_{k,i}))^{n-1}f(x_1,u_{2,i},\ldots,u_{k,i})$  is 
\[r_{n-\ell}(u_{2,i},\ldots,u_{k,i}) (p_n(u_{2,i},\ldots,u_{k,i}))^{n-\ell},\]
where $r_{n-\ell}(x_2,\ldots,x_k) \in \F_q[x_2,\ldots, x_k]$ is a polynomial of degree at most $\ell d$. 

Multiplying both sides of equation~\eqref{eq:f} by $(p_n(x_2,\ldots,x_k))^{n-1}$ and comparing the coefficient of $x_1^{n-\ell}$, for each $1\leq i \leq T$, we deduce from equation~\eqref{eq:fph2} that 
\beqn p_{n-\ell}(u_{2,i},\ldots,u_{k,i})(p_n(u_{2,i},\ldots,u_{k,i}))^{n-1}=r_{n-\ell}(u_{2,i},\ldots,u_{k,i})(p_n(u_{2,i},\ldots,u_{k,i}))^{n-\ell}.\eeqn
Thus, for each $1\leq i \leq T$, $(u_{2,i},\ldots,u_{k,i}) \in \F_q^{k-1}$ is a zero of the polynomial 
  \[g_{\ell}(x_2,\ldots,x_k):= p_{n-\ell}(x_2,\ldots,x_k)(p_n(x_2,\ldots,x_k))^{n-1}-r_{n-\ell}(x_2,\ldots,x_k)(p_n(x_2,\ldots,x_k))^{n-\ell}.\]
Since $p_{n}$ and $ p_{n-\ell}$ have degree at most $d$, and $r_{n-\ell}$ has degree at most $\ell d$, it follows that $g_{\ell}$ has degree at most $nd$. In view of the Schwartz-Zippel lemma, to conclude that $|Z|=T\leq ndq^{k-2}$, it suffices to prove the following claim.

\begin{claim}
    There exists $\ell_0$ with $1\leq \ell_0\leq n$ so that $g_{\ell_0}(x_2,\ldots,x_k)$ is not identically zero.
\end{claim}

\begin{poc}
We give a proof by contradiction. Assume that $g_{\ell}(x_2,\ldots,x_k)$ is identically zero for any $\ell\in \{1,\ldots, n\}$.
   Then it follows from equation~\eqref{eq:fph2} that 
$$(p_n(x_2,\ldots,x_k))^{n-1}f(x_1,x_2,\ldots,x_k)=h^2(x_1,x_2,\ldots,x_k).$$
   We can write $f(x_1,x_2,\ldots,x_k)=f_1(x_1,x_2,\ldots,x_k)(f_2(x_1,x_2,\ldots,x_k))^2$ such that $f_1(x_1,x_2,\ldots,x_k)$ is square-free. Note that $f_1$ still satisfies the primitive condition. Then we have \[(p_n(x_2,\ldots,x_k))^{n-1}f_1(x_1,x_2,\ldots,x_k)=(h(x_1,x_2,\ldots,x_k)/f_2(x_1,x_2,\ldots,x_k))^2.\] Since $f_1$ is square-free, we must have $f_1(x_1,x_2,\ldots,x_k) \mid (p_n(x_2,\ldots,x_k))^{n-1}$. Thus, \[f_1(x_1,x_2,\ldots,x_k)=g(x_2,\ldots,x_k)\] for some polynomial $g$. By the primitive condition, $g$ has no zero over $\overline{\F_q}$. This forces $g$ to be a constant polynomial and thus $f(x_1,x_2,\ldots,x_k)=C(f_2(x_1,x_2,\ldots,x_k))^2$ for some $C\in \F_q$, violating our assumption. 
\end{poc}
\end{proof}

\begin{remark}
    When $f(x_1,x_2, \ldots, x_k)\in \F_q[x_1,x_2,\ldots, x_k]$ is diagonal and symmetric with even degree $d$, that is,  $f(x_1,x_2, \ldots, x_k)=ax_1^{d}+ \cdots +ax_k^{d}$ with $a\in 
\F_q^*$, well-known estimates on the number of solutions of the diagonal equations can give bounds on $|X|$. In this case, we have
\begin{align*} 
X
&=\{(u_2, \ldots, u_k)\in \F_q^{k-1}~|~ au_2^{d}+ \cdots + au_k^{d}=0\}.
\end{align*}
For instance, \cite[Theorem 6.36]{LN97} gives
$|X|=q^{k-2}+O_d(q^{k/2}).$
We can observe that the bound given in \cref{prop:Y} is sharp up to the constant factor $d^2+d$.
\end{remark}

\section{Proof of \cref{thm:main}} \label{Sec4}
\subsection{A sufficient condition}
Now we provide the following sufficient condition on the quasirandomness of $Y_{f,q}$.
\begin{proposition}\label{prop:key}
Let $f\in \F_q[x_1,x_2,\ldots , x_{k}]$ be a symmetric polynomial of degree $d$. Let $\mathcal{B}$ be the set of tuples $(u_2(0), u_2(1), \ldots, u_k(0), u_k(1))\in \F_q^{2k-2}$ such that
$$
\prod_{\substack{\varepsilon_i \in \{0,1\}\\2\leq i \leq k}} f\big(x, u_2(\varepsilon_2), u_3(\varepsilon_3), \ldots, u_k(\varepsilon_k)\big)
$$
is a constant multiple of the square of a polynomial in $\F_q[x]$. Then 
$$N_{Y_{f,q}}^*(\mathcal{O}_k^e)= \frac{q^{2k}}{2}+O_d(q^{2k-1}+|\mathcal{B}|q^2).$$
\end{proposition}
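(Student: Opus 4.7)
The plan is to reinterpret $N_{Y_{f,q}}^*(\mathcal{O}_k^e)$ as a character sum, factor it across the first pair of octahedron vertices, and then bound it via Weil's estimate. Unpacking the definition, $N_{Y_{f,q}}^*(\mathcal{O}_k^e)$ counts labeled $2k$-tuples $(v_1(0),v_1(1),\ldots,v_k(0),v_k(1))$ of distinct elements of $\F_q$ for which the number of ``octahedron edges'' $\{v_1(\varepsilon_1),\ldots,v_k(\varepsilon_k)\}$ (with $\varepsilon\in\{0,1\}^k$) that lie in $E(Y_{f,q})$ is even. Let $\chi$ denote the quadratic character on $\F_q$, set $f_\varepsilon:=f(v_1(\varepsilon_1),\ldots,v_k(\varepsilon_k))$, and let $e_\varepsilon\in\{0,1\}$ indicate whether the corresponding set is an edge. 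A case-check shows that whenever $f_\varepsilon\neq 0$ one has $(-1)^{e_\varepsilon}=-\chi(f_\varepsilon)$; since $2^k$ is even for $k\ge 2$, this yields $(-1)^{\sum_\varepsilon e_\varepsilon}=\prod_\varepsilon\chi(f_\varepsilon)$ on the event that every $f_\varepsilon$ is nonzero. By \cref{SZ}, the number of tuples with some $f_\varepsilon=0$ or with a repeated coordinate is $O_d(q^{2k-1})$, so these can be absorbed into the error, giving
$$
N_{Y_{f,q}}^*(\mathcal{O}_k^e)=\frac{q^{2k}}{2}+\frac{1}{2}S+O_d(q^{2k-1}),\qquad S:=\sum_{\vec v\in\F_q^{2k}}\prod_{\varepsilon\in\{0,1\}^k}\chi(f_\varepsilon).
$$

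Next I would separate the coordinates $v_1(0),v_1(1)$ from $\vec u:=(u_2(0),u_2(1),\ldots,u_k(0),u_k(1))$ and use the multiplicativity of $\chi$. The product $\prod_\varepsilon\chi(f_\varepsilon)$ factors as $\chi\bigl(F(v_1(0);\vec u)\bigr)\cdot\chi\bigl(F(v_1(1);\vec u)\bigr)$, where
$$
F(x;\vec u):=\prod_{(\varepsilon_2,\ldots,\varepsilon_k)\in\{0,1\}^{k-1}} f\bigl(x,u_2(\varepsilon_2),\ldots,u_k(\varepsilon_k)\bigr)
$$
is exactly the polynomial appearing in the definition of $\mathcal B$. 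Summing independently over $v_1(0)$ and $v_1(1)$ then produces a perfect square:
$$
S=\sum_{\vec u\in\F_q^{2k-2}}\left(\sum_{x\in\F_q}\chi\bigl(F(x;\vec u)\bigr)\right)^{2}.
$$

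Finally, I would bound $S$ by splitting on whether $\vec u\in\mathcal B$. The polynomial $F(\cdot;\vec u)\in\F_q[x]$ has degree at most $d\cdot 2^{k-1}$. If $\vec u\notin\mathcal B$, then by construction $F(\cdot;\vec u)$ is not a constant multiple of the square of a polynomial, so Weil's bound (\cref{Weil}) gives $\bigl|\sum_{x}\chi(F(x;\vec u))\bigr|=O_d(\sqrt q)$, and hence these tuples contribute at most $q^{2k-2}\cdot O_d(q)=O_d(q^{2k-1})$ to $S$. For $\vec u\in\mathcal B$, the trivial bound $\bigl|\sum_{x}\chi(F(x;\vec u))\bigr|\le q$ gives a total contribution of at most $|\mathcal B|\,q^2$. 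Combining the two yields $S=O_d(q^{2k-1}+|\mathcal B|q^2)$ and hence the proposition. I expect the only delicate point to be the initial character-sum reformulation: one must verify the sign identity and carefully handle the $f_\varepsilon=0$ and coincidence cases via \cref{SZ}; after that, everything reduces to a standard Weil-type estimate combined with a trivial bound on the exceptional set $\mathcal B$.
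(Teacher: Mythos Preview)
Your proposal is correct and follows essentially the same route as the paper: rewrite the EPO count as $\tfrac{q^{2k}}{2}+\tfrac12 S+O_d(q^{2k-1})$ via the quadratic character (handling the $f_\varepsilon=0$ and repeated-coordinate cases by \cref{SZ}), factor $S$ by separating the two $v_1$-coordinates so that the inner sum becomes $\sum_x\chi(F(x;\vec u))$, and then split on $\vec u\in\mathcal B$ versus $\vec u\notin\mathcal B$ using the trivial bound and Weil's bound respectively. Your observation that $S$ is literally a perfect square in the inner sum is a slightly cleaner way of saying what the paper writes as the product of the two identical sums labeled (1) and (2).
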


Our proof relies on the celebrated Weil bound for complete character sums; see, for example, \cite[Theorem 5.41]{LN97}.

\begin{lemma}[Weil's bound]\label{Weil}
Let $\chi$ be a multiplicative character of $\F_q$ of order $e>1$, and let $g \in \F_q[x]$ be a monic polynomial of positive degree that is not an $e$-th power of a polynomial. 
Let $s$ be the number of distinct roots of $g$ in its
splitting field over $\F_q$. Then for any $a \in \F_q$,
$$\bigg |\sum_{x\in\mathbb{F}_q}\chi\big(ag(x)\big) \bigg|\le(s-1)\sqrt q.$$
\end{lemma}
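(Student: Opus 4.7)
The plan is to derive Weil's bound from the Riemann hypothesis for curves over finite fields (Weil, 1940s). The central idea is that the character sum $S_\chi := \sum_{x \in \F_q} \chi(ag(x))$ is essentially the number of $\F_q$-points on an auxiliary Kummer-type curve, and that quantity is controlled by Weil's theorem on point counts.

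First I would reduce to the case $a = 1$ with $g$ squarefree. Absorbing $\chi(a)$ into the sum costs only a factor of modulus at most $1$. If we write $g = g_1 g_2^e$ with $g_1$ squarefree, then $\chi(g(x)) = \chi(g_1(x))$ wherever both are defined, and the hypothesis that $g$ is not an $e$-th power forces $g_1$ to be non-constant; a short bookkeeping check reduces the desired inequality to the analogous bound for $g_1$, with $s$ replaced by $\deg g_1$.

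Next, I would consider the smooth projective model $\widetilde{C}$ of the affine Kummer curve $C : y^e = g_1(x)$ over $\F_q$. For $t \in \F_q$, the number of $y \in \F_q$ satisfying $y^e = t$ equals $\sum_{\psi^e = \mathbf{1}} \psi(t)$, with the convention $\psi(0) = 0$ for $\psi \ne \mathbf{1}$ and $\mathbf{1}(0) = 1$. Summing over $x \in \F_q$ yields
$$
\#C(\F_q) \;=\; q + \sum_{\substack{\psi^e = \mathbf{1} \\ \psi \ne \mathbf{1}}} S_\psi.
$$
By the Riemann-Hurwitz formula, the genus of $\widetilde{C}$ is $(\deg g_1 - 1)(e - 1)/2$ when $\gcd(e, q) = 1$, so Weil's theorem $|\#\widetilde{C}(\F_q) - (q+1)| \leq 2g(\widetilde{C})\sqrt{q}$ controls $\sum_\psi S_\psi$ up to $O(1)$ corrections coming from points at infinity on $\widetilde{C}$. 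To pass from this sum bound to the individual bound $|S_\chi| \leq (s - 1)\sqrt{q}$ (rather than the weaker $(s-1)(e-1)\sqrt{q}$ obtained by the triangle inequality), I would invoke the factorization of the zeta function of $\widetilde{C}$ as $(1-T)^{-1}(1-qT)^{-1} \prod_\psi L(\psi, T)$: each L-function $L(\psi, T) = \prod_{\text{closed } x} (1 - \psi(g_1(x)) T^{\deg(x)})^{-1}$ is a polynomial of degree $s - 1$ whose reciprocal roots all have absolute value $\sqrt{q}$ by Weil's theorem, and $-S_\chi$ is the sum of the $s-1$ reciprocal roots of $L(\chi, T)$.

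The main obstacle is Weil's theorem itself: its proof uses either the theory of Jacobians together with the Rosati involution (Weil's original approach) or étale cohomology (Grothendieck-Deligne), both well beyond what a proof sketch can honestly reproduce. Secondary technical points include the squarefree reduction, the careful accounting of points at infinity on $\widetilde{C}$, and confirming the precise degree $s-1$ of $L(\chi, T)$ when $g$ has repeated roots or when $g$ fails to be separable at infinity. This is precisely why the authors, like nearly all applications of this bound in the literature, simply cite the statement from Lidl-Niederreiter rather than reproving it from scratch.
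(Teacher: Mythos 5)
The paper does not prove this lemma at all: it is quoted verbatim as \cite[Theorem 5.41]{LN97}, so there is no internal argument to compare against, and citing it is also the standard practice you note at the end. Your sketch is the classical function-field route (Kummer curve, zeta-function factorization into $L(\psi,T)$, Riemann hypothesis for curves), which is indeed how such bounds are established in the literature, and you are upfront that the core input (Weil's theorem) is taken as a black box -- so your proposal is a reduction to that theorem rather than a proof, which is morally the same status as the paper's citation.

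Two concrete points in the reduction are stated incorrectly as written, though both are repairable. First, for general $e>1$ you cannot write $g=g_1g_2^e$ with $g_1$ \emph{squarefree}: e.g.\ $g=p^2$ with $p$ irreducible and $e=3$ admits no such factorization. The correct normalization is to make $g_1$ $e$-th-power-free (all multiplicities in $\{1,\dots,e-1\}$); squarefree is only equivalent to this when $e=2$, which happens to be the only case the paper uses ($\chi$ quadratic). The subsequent bookkeeping must then track multiplicities modulo $e$, since ramification of the subcover attached to $\psi=\chi^j$ at a root of multiplicity $d_i$ depends on whether the order of $\psi$ divides $d_i$. Second, the genus formula $(\deg g_1-1)(e-1)/2$ and the claim that each $L(\psi,T)$ has degree exactly $s-1$ hold only under side conditions (e.g.\ $\gcd(e,\deg g_1)=1$, resp.\ ramification at infinity and simple roots); when infinity is unramified the degree of $L(\chi,T)$ drops to $s-2$ and $-S_\chi$ picks up an extra unit-modulus term from the place at infinity, and this case analysis is precisely where the uniform bound $(s-1)\sqrt q$ comes from. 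You flag this yourself as a point to confirm, so it is not a fatal gap, but as written the two statements would fail for $e\geq 3$ or $e\mid\deg g_1$; with the $e$-th-power-free reduction and the infinity correction spelled out, the outline matches the standard proof underlying the cited result.
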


\begin{proof}[Proof of \textup{\cref{prop:key}}]
Let $\chi$ be the quadratic character of $\F_q$. Let $v_1,v_2,\ldots, v_k$ be distinct vertices of $Y_{f,q}$. Then by definition, $\{v_1,v_2,\ldots, v_k\}$ forms an edge in $Y_{f,q}$ if and only if $\chi(f(v_1,v_2,\ldots, v_k))\in \{0,1\}$. Thus, $2k$ distinct vertices $u_1(0),u_1(1), \ldots, u_k(0),u_k(1)$ forms an induced copy of an EPO if and only if there are an even number of vectors $\mathcal{E}=(\varepsilon_1, \varepsilon_2, \ldots, \varepsilon_k)\in \{0,1\}^k$ such that
$$
\chi\big(f(u_1(\varepsilon_1),u_2(\varepsilon_2),\ldots, u_k(\varepsilon_k))\big)=-1.
$$
We then consider two cases:

\smallskip
\textbf{Case 1:} $f(u_1(\varepsilon_1),u_2(\varepsilon_2),\ldots, u_k(\varepsilon_k))=0$ for some $\mathcal{E} \in \{0,1\}^k$. By the Schwartz-Zippel lemma, the number of such induced copies of EPOs is at most $2^k d q^{2k-1}$.

\smallskip
\textbf{Case 2:} $f(u_1(\varepsilon_1),u_2(\varepsilon_2),\ldots, u_k(\varepsilon_k))\neq 0$ for all $\mathcal{E} \in \{0,1\}^k$.
In this case, 
$$
\prod_{\mathcal{E} \in \{0,1\}^k}\chi\big(f(u_1(\varepsilon_1),u_2(\varepsilon_2),\ldots, u_k(\varepsilon_k)))\big)=(-1)^r,
$$
where $r$ is the number of non-edges. Thus, we have
\begin{small}
\begin{align*}
&\frac{1}{2} \bigg(1+\prod_{\mathcal{E} \in \{0,1\}^k}\chi\big(f(u_1(\varepsilon_1),u_2(\varepsilon_2),\ldots, u_k(\varepsilon_k))\big)\bigg)\\
&=\begin{cases}
    1, & \text{if } u_1(0),u_1(1), \ldots, u_k(0),u_k(1) \text{ forms an induced copy of an EPO}\\
    0, & \text{otherwise}.
\end{cases}
\end{align*}
\end{small}
Therefore, we deduce that
$$ N_{Y_{f,q}}^*(\mathcal{O}_k^e)=\sum\mathop{'}\frac{1}{2}\bigg(1+\prod_{\mathcal{E} \in \{0,1\}^k}\chi\big(f(u_1(\varepsilon_1),u_2(\varepsilon_2),\ldots, u_k(\varepsilon_k))\big)\bigg)+O_d(q^{2k-1}),$$
where the primed sum runs over all $2k$-tuples $(u_1(0),u_1(1) 
,\ldots, u_k(0),u_k(1)) \in \F_q^{2k}$ with distinct entries. Note that the number of $2k$-tuples $(u_1(0),u_1(1) 
,\ldots, u_k(0),u_k(1)) \in \F_q^{2k}$ with repeated entries is at most $\binom{2k}{2}q^{2k-1}$. Therefore, we still have
$$
N_{Y_{f,q}}^*(\mathcal{O}_k^e)=\sum\frac{1}{2}\bigg(1+\prod_{\mathcal{E} \in \{0,1\}^k}\chi\big(f(u_1(\varepsilon_1),u_2(\varepsilon_2),\ldots, u_k(\varepsilon_k))\big)\bigg)+O_d(q^{2k-1}),
$$
where the sum runs over all $2k$-tuples $(u_1(0),u_1(1) 
,\ldots, u_k(0),u_k(1)) \in \F_q^{2k}$. It follows that $N_{Y_{f,q}}^*(\mathcal{O}_k^e)=\frac{q^{2k}}{2}+S+O_d(q^{2k-1})$, 
where $S$ is the following sum below running over all $2k$-tuples $(u_1(0),u_1(1) 
,\ldots, u_k(0),u_k(1)) \in \F_q^{2k}$: 
\begin{align*}
    S:=\sum\prod_{\mathcal{E} \in \{0,1\}^k}\chi\big(f(u_1(\varepsilon_1),u_2(\varepsilon_2),\ldots, u_k(\varepsilon_k))\big). 
\end{align*}

By changing the order of summation, we have
\begin{footnotesize}
    \begin{align*}
    S=& \sum_{\substack{u_2(0),u_2(1) \\ 
,\ldots, \\ u_k(0),u_k(1) }} \sum_{u_1(0)}\sum_{u_1(1)}\left(\prod_{\substack{\varepsilon_1=0 \\ \mathcal{E} \in \{0,1\}^k}}\chi\big(f(u_1(\varepsilon_1),u_2(\varepsilon_2),\ldots, u_k(\varepsilon_k))\big)\prod_{\substack{\varepsilon_1=1 \\ \mathcal{E} \in \{0,1\}^k}} \chi\big(f(u_1(\varepsilon_1),u_2(\varepsilon_2),\ldots, u_k(\varepsilon_k))\big)\right)\\
=&\sum_{\substack{u_2(0),u_2(1) \\ 
,\ldots, \\ u_k(0),u_k(1) }} \underbrace{\left(\sum_{u_1(0)}\chi\bigg(\prod_{\substack{\varepsilon_1=0 \\ \mathcal{E} \in \{0,1\}^k}} f\big(u_1(0),u_2(\varepsilon_2),\ldots, u_k(\varepsilon_k)\big)\bigg)\right)}_{(1)}\underbrace{\left(\sum_{u_1(1)}\chi\bigg(\prod_{\substack{\varepsilon_1=1 \\ \mathcal{E} \in \{0,1\}^k}} f\big(u_1(1),u_2(\varepsilon_2),\ldots, u_k(\varepsilon_k)\big)\bigg)\right)}_{(2)}.
\end{align*}
\end{footnotesize}

Note that, if we fix $(u_2(0),u_2(1),\ldots,u_k(0),u_k(1))\in \F_q^{2k-2}$, then 
\[g(x):=\prod_{\substack{\varepsilon_1=0 \\ \mathcal{E} \in \{0,1\}^k}} f(x,u_2(\varepsilon_2),\ldots, u_k(\varepsilon_k))\]
is a polynomial in $x$ of degree at most $2^{k-1}d$ ( also when $\epsilon_1=1$). Thus, the absolute value of both $(1)$ and $(2)$ is at most $q$ provided that $g(x)$ is a constant multiple of the square of a polynomial in $\F_q[x]$; otherwise, Weil's bound implies that the absolute value of both $(1)$ and $(2)$ is at most $2^{k-1}d\sqrt{q}$. Thus, we can bound $|S|$ by splitting the sum over $\mathcal{B}$ and $\F_q^{2k-2} \setminus \mathcal{B}$ to conclude that
$$
|S|\leq |\mathcal{B}|q^2+(q^{2k-2}-|\mathcal{B}|)(2^{k-1}d\sqrt{q})^2\leq |\mathcal{B}|q^2+q^{2k-2}(2^{k-1}d\sqrt{q})^2=O_d(q^{2k-1}+|\mathcal{B}|q^2),
$$
as required.
\end{proof}

\subsection{Proof of \cref{thm:main} via \cref{prop:key}}

Let $Y_{f,q} \in \mathcal{Y}_d$; we claim that
$$
N^*_{Y_{f,q}}(\mathcal{O}_k^e)=\frac{q^{2k}}{2}+O_d(q^{2k-1}).
$$
As in \cref{prop:key}, let $\mathcal{B}$ be the set of tuples $(u_2(0), u_2(1), \ldots, u_k(0), u_k(1))\in \F_q^{2k-2}$ such that
\begin{equation}\label{prod of f}
\prod_{\substack{\varepsilon_i \in \{0,1\}\\2\leq i \leq k}} f\big(x, u_2(\varepsilon_2), u_3(\varepsilon_3), \ldots, u_k(\varepsilon_k)\big)
\end{equation}
is a constant multiple of the square of a polynomial in $\F_q[x]$.
In view of \cref{prop:key}, to complete the proof  of \cref{thm:main}, it suffices to show that $|\mathcal{B}|=O_d(q^{2k-3}).$
Since $f$ satisfies the primitive condition, by definition, the polynomial $f(x, v_2, v_3, \ldots, v_k)$ is not identically zero for any $(v_2,v_3,\ldots, v_k)\in \overline{\F_q}^{k-1}$. This fact will be used repeatedly throughout the proof below. In particular, this fact implies that the polynomial in \cref{prod of f} is not identically zero for any $(u_2(0), u_2(1), \ldots, u_k(0), u_k(1))\in \F_q^{2k-2}$.

We first give an upper bound on the number of tuples $(u_2(0), u_2(1), \ldots, u_k(0), u_k(1))\in \mathcal{B}$ such that $f(x, u_2(0),u_3(0),\ldots,u_k(0))$ is a constant multiple of the square of a polynomial in $\F_q[x]$. By \cref{prop:Y}, the number of such tuples is at most 
$O_d(q^{k-2})$, which is hence upper-bounded by $O_d(q^{2k-3})$.
It remains to bound the number of tuples $(u_2(0), u_2(1), \ldots, u_k(0), u_k(1))$ in $\mathcal{B}$ such that $f(x, u_2(0),u_3(0),\ldots,u_k(0))$ is not a constant multiple of the square of a polynomial in $\F_q[x]$. Note that for such a tuple in $\mathcal{B}$, since the polynomial in \cref{prod of f} is not identically zero, there exist $\varepsilon_2, \ldots, \varepsilon_k \in \{0,1\}$ that are not all zero, such that     
$$\gcd\big(f(x,u_2(0),\ldots, u_k(0)),f(x, u_2(\varepsilon_2), u_3(\varepsilon_3), \ldots, u_k(\varepsilon_k))\big)$$ has degree at least $1$; equivalently, these two polynomials have a common root $c\in \overline{\F_q}$. Note that for each $(u_2(0), u_3(0),\ldots, u_k(0))\in \F_q^{k-1}$, the polynomial $f(x,u_2(0),\ldots, u_k(0))$ has at most $d$ roots in $\overline{\F_q}$ since it is not identically zero. Thus, it suffices to show that if $u_2(0),\ldots, u_k(0) \in \F_q$ are fixed, and $c \in \overline{\F_q}$ is fixed, then for each $\varepsilon_2, \ldots, \varepsilon_k \in \{0,1\}$ not all zero, the number of tuples $(u_2(1), u_3(1),\ldots, u_k(1))\in \F_q^{k-1}$ such that $f(c, u_2(\varepsilon_2), u_3(\varepsilon_3), \ldots, u_k(\varepsilon_k))=0$
is $O_d(q^{k-2})$. 

Let $u_2(0),u_3(0),\ldots, u_k(0) \in \F_q$, and $c \in \overline{\F_q}$ be all fixed. Let  $\varepsilon_2, \ldots, \varepsilon_k \in \{0,1\}$ such that they are not all zero. 
Since $f$ is symmetric, without loss of generality, we can assume that there is $t\geq 2$, such that $\varepsilon_2=\varepsilon_3=\cdots=\varepsilon_{t}=1$ and $\varepsilon_{t+1}=\varepsilon_{t+2}=\cdots=\varepsilon_{k}=0$. Then 
$$f(x_2,x_3, \ldots, x_t, c, u_{t+1}(0), u_{t+2}(0), \ldots, u_k(0))=f(c, x_2,x_3, \ldots, x_t, u_{t+1}(0), u_{t+2}(0), \ldots, u_k(0))$$
is a polynomial with $t-1$ variables defined over $\overline{\F_q}$, and it is not identically zero since $t\geq 2$. Thus, by the Schwartz-Zippel lemma, the number of tuples $(u_2(1), u_3(1),\ldots, u_t(1))\in \F_q^{t-1}$ such that
$f(c, u_2(1), u_3(1),\ldots, u_t(1), u_{t+1}(0), u_{t+2}(0), \ldots, u_k(0))=0$
is at most $dq^{t-2}$. Since $u_{t+1}(1), u_{t+2}(1), \ldots, u_k(1)$ can be chosen arbitrarily in $\F_q$, it follows that the number of tuples $(u_2(1), u_3(1),\ldots, u_k(1))\in \F_q^{k-1}$ such that 
$$f(c, u_2(\varepsilon_2), u_3(\varepsilon_3), \ldots, u_k(\varepsilon_k))=f(c, u_2(1), u_3(1),\ldots, u_t(1), u_{t+1}(0), u_{t+2}(0), \ldots, u_k(0))=0$$
 is at most $dq^{t-2} \cdot q^{k-t}=dq^{k-2}=O_d(q^{k-2})$, as required.

\begin{remark}\label{rem:moregeneral}  
\cref{thm:main} in fact applies to a slightly larger family of polynomials. Let $f$ be a symmetric polynomial of degree $d$ in $\F_q[x_1,x_2,\ldots,x_k]$  that is not necessarily admissible. Assume that $f$ can be written as $g \cdot h$, where $g$ is a symmetric polynomial that can be written as a product of polynomials depending at most $k-1$ variables, and $h$ is an admissible polynomial. For such an $f$, one can begin with the product in \cref{prod of f} and apply a similar argument to show the same result for $N^*_{Y_{f,q}}(\mathcal{O}_k^e)$.
\end{remark}

\section{Asymptotic formula for the number of $f$-Diophantine sets with size $m$}\label{Sec5}

Recently, Slavov \cite{S26} used an explicit version of the Lang-Weil bound~\cite[Theorem 7.1]{CM06} to prove a result on the joint distribution of square values of polynomials over a finite field. The following lemma is a special case of his result \cite[Theorem 3 and Remark 13]{S26}.

\begin{lemma}[Slavov]\label{lem:LW}
Let $n,m,d$ be positive integers and $q$ be an odd prime power. Let $f_1, f_2,\ldots, f_n\in\F_q[x_1,x_2,\ldots, x_m]$ with degree $d$. Then the following are equivalent:

(i) For any nonempty subset $T \subset \{1,2,\ldots, n\}$, the polynomial $\prod_{i \in T} f_i$ is not a constant multiple of the square of a polynomial in $\F_q[x_1,\ldots, x_m]$.

(ii) The number of $(a_1, a_2, \ldots, a_m)\in \F_q^m$ such that $f_i(a_1,a_2, \ldots, a_m)$ is a nonzero square in $\F_q$ for all $1\leq i \leq n$ is 
$$
\frac{q^m}{2^n}+O((2d)^{2n}q^{m-1/2}+(2d)^{13n/3}q^{m-1}),
$$
where the implied constant in the error term is absolute.
\end{lemma}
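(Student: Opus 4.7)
The plan is to encode the joint-square condition as a single auxiliary variety and reduce everything to one application of the explicit Lang-Weil bound of Cafure-Matera. First I would introduce $V \subset \mathbb{A}^{m+n}_{\F_q}$ defined by the system $y_i^2 = f_i(x_1,\ldots,x_m)$ for $1\le i \le n$. Counting $\F_q$-points fiberwise over $\mathbb{A}^m$, each $a \in \F_q^m$ with all $f_i(a)$ nonzero squares contributes exactly $2^n$ points to $|V(\F_q)|$, while $a$'s on the branch locus $\bigcup_i \{f_i = 0\}$ contribute at most $2^{n-1}$ each and number at most $nd\,q^{m-1}$ by the Schwartz-Zippel lemma. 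Hence $|V(\F_q)| = 2^n N + O(nd\,2^{n-1} q^{m-1})$, where $N$ is the count appearing in (ii), so the task reduces to estimating $|V(\F_q)|$.

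The heart of the argument is the algebraic claim that condition (i) is equivalent to the geometric irreducibility of $V$ over $\overline{\F_q}$. The function field of $V$ is $L := K(y_1,\ldots,y_n)$, where $K = \overline{\F_q}(x_1,\ldots,x_m)$ and $y_i^2 = f_i$. By Kummer theory for $(\Z/2)^n$, the extension $L/K$ has full degree $2^n$ if and only if the classes $[f_1],\ldots,[f_n]$ are $\mathbb{F}_2$-linearly independent in $K^*/(K^*)^2$, equivalently no $\prod_{i\in T} f_i$ with $T \neq \emptyset$ is a square in $K$. A polynomial in $\F_q[x_1,\ldots,x_m]$ is a square in $K$ iff it equals a scalar times the square of a polynomial in $\overline{\F_q}[x_1,\ldots,x_m]$, and a short Galois-descent argument (grouping Frobenius-conjugate irreducible factors of $f_i$) shows this is equivalent to the analogous condition in $\F_q[x_1,\ldots,x_m]$, i.e., to condition (i). Granted $[L:K] = 2^n$, the variety $V$ is a geometrically irreducible complete intersection of dimension $m$ and degree $\delta \le (\max(2,d))^n$, so the Cafure-Matera bound yields $\bigl||V(\F_q)| - q^m\bigr| \leq (\delta-1)(\delta-2) q^{m-1/2} + 5\delta^{13/3} q^{m-1} = O(d^{2n} q^{m-1/2} + (2d)^{13n/3} q^{m-1})$; combining with the previous step and dividing by $2^n$ delivers (ii).

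For the converse (ii) $\Rightarrow$ (i), I would argue by contrapositive using the character expansion
$$
N = \frac{1}{2^n}\sum_{S\subseteq\{1,\ldots,n\}} \sum_{a\in\F_q^m} \chi\Bigl(\prod_{i\in S} f_i(a)\Bigr)\prod_{j\notin S} \chi(f_j(a))^2,
$$
with $\chi$ the quadratic character, derived from $\mathbf{1}_{y\text{ is a nonzero square}} = \tfrac12(\chi(y)^2 + \chi(y))$. The $S = \emptyset$ summand supplies the main term $q^m/2^n$, while each $S \neq \emptyset$ contributes $O(q^{m-1/2})$ precisely when $\prod_{i\in S} f_i$ is not a scalar times a square. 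If condition (i) fails at some $T$ with $\prod_{i\in T} f_i = ch^2$, the character factor on the open set $\{h\neq 0\}$ is the constant $\chi(c)$, so the $S = T$ summand contributes a term of size $\chi(c) q^m$ and shifts $N$ away from $q^m/2^n$ by a fixed fraction of $q^m$, contradicting (ii). The main obstacle is the algebraic equivalence in the middle paragraph: while Kummer theory produces the irreducibility statement cleanly at the function-field level over $\overline{\F_q}$, descending ``not a scalar multiple of a square'' from $\overline{\F_q}[x_1,\ldots,x_m]$ down to $\F_q[x_1,\ldots,x_m]$ requires care to absorb scalar ambiguity and handle possible common factors among the $f_i$; everything else is Schwartz-Zippel plus a single invocation of the explicit Lang-Weil bound.
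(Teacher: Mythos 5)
The paper itself gives no proof of \cref{lem:LW}: it is quoted as a special case of Slavov \cite{S24}, whose argument (as the paper notes) rests on the explicit Lang--Weil bound of Cafure and Matera \cite{CM06}. Your proposal is in substance a reconstruction of that cited argument: the cover $V\colon y_i^2=f_i(x)$, absolute irreducibility via Kummer theory over $K=\overline{\F_q}(x_1,\ldots,x_m)$ together with the descent ``square in $K$ if and only if constant multiple of a square in $\F_q[x_1,\ldots,x_m]$'' (which is precisely the equivalence between the two phrasings of condition (i) that the paper mentions only in passing), fiberwise counting with Schwartz--Zippel on the branch locus, and a single application of \cite[Theorem 7.1]{CM06}. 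The outline is sound; one point worth making explicit is that irreducibility of $V$ is indeed controlled by the generic fibre because $V$ is finite and flat over $\mathbb{A}^m$ (its coordinate ring is a free module over $\F_q[x_1,\ldots,x_m]$), so $V$ has no vertical components and the number of its components equals the number of factors of the \'etale $K$-algebra $K[y_1,\ldots,y_n]/(y_i^2-f_i)$.

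Two caveats on the details. Quantitatively, B\'ezout gives $\deg V\le\max(2,d)^n$, so your first error term is $\max(2,d)^{2n}q^{m-1/2}$; this matches the stated $d^{2n}q^{m-1/2}$ only when $d\ge 2$, whereas for $d=1$ (allowed by the lemma, and relevant for admissible linear $f$) it becomes $4^{n}q^{m-1/2}$, which is not absorbed by the stated error once $q$ exceeds a fixed power of $2^{n}$; so either treat $d=1$ separately or state the first term with $\max(2,d)^{2n}$. Second, in the direction (ii)$\Rightarrow$(i), if (i) fails there may be several nonempty $S$ with $\prod_{i\in S}f_i=c_Sh_S^2$, and their main terms $\chi(c_S)q^m$ could cancel the single one you isolate; the repair is to note that these $S$ together with $\emptyset$ form a subgroup of $\{0,1\}^n$ on which $S\mapsto\chi(c_S)$ is a character, so the total main contribution is either $0$ or at least $2q^m$, and in either case it is incompatible with $q^m/2^n$ for large $q$. (That direction is not used in the paper's applications.)
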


We end the paper by presenting a proof of \cref{thm:asymp}. Most arguments used in the proof below have already appeared in previous sections.

\begin{proof}[Proof of \cref{thm:asymp}]
Let $q$ be an odd prime power and $f \in \F_q[x_1,x_2,\ldots,x_k]$ be an admissible polynomial with degree $d$. For each $m\geq k$, let $N(f,m)$ be the number of $f$-Diophantine sets of size $m$. 

Let $\chi$ be the quadratic character of $\F_q$, and let $\Tilde{\chi}:\F_q \to \F_q$ such that $\Tilde{\chi}(x)=\chi(x)$ for $x\neq 0$, and $\Tilde{\chi}(0)=1$. We can express $N(f,m)$ in terms of the following character sum:
\[
N(f,m)
 =\sum_{\substack{A\subset \mathbb{F}_q \\ |A|=m }} \prod_{\substack{T=\{b_1,\ldots, b_k\}\subset A\\ |T|=k}}\bigg(\frac{1+\Tilde{\chi}\big(f(b_1,\ldots , b_k)\big)}{2}\bigg).
\]
Let
\begin{equation}\label{eq:SS}
S(f,m):=\sum\mathop{'} \prod_{\substack{I=\{i_1, i_2,\ldots, i_k\}\subset \{1,2, \ldots, m\}\\ |I|=k}}\bigg(\frac{1+\chi\big(f(a_{i_1},\ldots , a_{i_k})\big)}{2}\bigg),
\end{equation}
with the primed sum runs over all pairwise distinct $(a_1, a_2, \ldots, a_m) \in \F_q^m$ such that $f(a_{i_1},\ldots , a_{i_k})\neq 0$ for all $I=\{i_1, i_2,\ldots, i_k\}\subset \{1,2, \ldots, m\}$ with $|I|=k$. Similar to the proof of \cref{prop:key}, we have
\begin{equation}\label{eq:N}
N(f,m)=\frac{S(f,m)}{m!}+O(dq^{m-1}).
\end{equation}
Let $\mathcal{I}$ be the collection of all subsets of $\{1,2,\ldots, m\}$ of size $k$. For each $I=\{i_1, i_2,\ldots, i_k\} \in \mathcal{I}$, define $f_I \in \F_q[x_1,x_2, \ldots, x_m]$ by \[f_I(x_1,x_2, \ldots, x_m)=f(x_{i_1}, x_{i_2}, \ldots, x_{i_k}).\] Based on equation~\eqref{eq:SS}, we see that $S(f,m)$ precisely counts the number of pairwise distinct $(a_1, a_2, \ldots, a_m)\in \F_q^m$ such that $f_I(a_1,a_2, \ldots, a_m)$ is a nonzero square in $\F_q$ for all $I \in \mathcal{I}$. 

Next, we show that for each nonempty subset $T \subset \mathcal{I}$, the polynomial $\prod_{I \in T} f_I$ is not a constant multiple of the square of a polynomial in $\F_q[x_1,\ldots, x_m]$. When $|T|=1$, this is obvious from the definition of admissible polynomials. Suppose otherwise that there is a subset $T \subset \mathcal{I}$ with $|T|\geq 2$ such that the polynomial $\prod_{I \in T} f_I$ is a constant multiple of the square of a polynomial in $\F_q[x_1,\ldots, x_m]$. Pick $J \in T$. From the definition of admissible polynomials, $f_{J}$ is not a constant multiple of the square of a polynomial in $\F_q[x_1,\ldots, x_m]$. Thus, there is $J' \in T$ with $J' \neq J$, such that $\gcd(f_J, f_{J'})$ is a polynomial with degree at least $1$. It follows that $J \cap J' \neq \emptyset$. Without loss of generality, assume that $J \cap J'=\{1,2,\ldots, t\}$ with $1\leq t \leq k-1$. Then $\gcd(f_J, f_{J'})=g(x_1, x_2, \ldots, x_t)$ for some polynomial $g \in \F_q[x_1,x_2, \ldots, x_t]$ with degree at least $1$, and in particular $g$ is a factor of $f$. However, this violates the primitive condition of the definition of admissible polynomials. 

Thus, by setting $n=\binom{m}{k}$ applied to the collection of polynomials $\{f_I: I \in \mathcal{I}\}$, we have verified condition (i) in~\cref{lem:LW}, and thus (ii) in the lemma implies that
\begin{equation}\label{eq:M}
M(f,m)=\frac{q^m}{2^{\binom{m}{k}}}
+O\big((2d)^{2\binom{m}{k}}q^{m-1/2}
+(2d)^{13\binom{m}{k}/3}q^{m-1}\big),
\end{equation}
where $M(f,m)$ denotes the number of tuples $(a_1,\ldots,a_m)\in \F_q^m$
such that $f_I(a_1,\ldots,a_m)$ is a nonzero square in $\F_q$ for all
$I\in \mathcal{I}$.
Moreover, $M(f,m)-S(f,m)$ is bounded by the number of tuples $(a_1,\ldots,a_m)\in \F_q^m$ with $a_i=a_j$ for some
$i\neq j$, which is at most $\binom{m}{2}q^{m-1}$.
Thus,
\begin{equation}\label{eq:S}
S(f,m)=M(f,m)+O(m^2q^{m-1}).
\end{equation}
Combining equations~\eqref{eq:N},~\eqref{eq:M}, and~\eqref{eq:S}, we obtain the desired result.
\end{proof}
 \section*{Acknowledgments}
 The authors thank Jaehoon Kim and Joonkyung Lee for their valuable feedback on a preliminary version of the manuscript. S. Yoo thanks Hyunwoo Lee for helpful discussions on the quasi-randomness of hypergraphs. The authors are also grateful to anonymous referees for their valuable comments and corrections. The research of C.H. Yip was supported in part by an NSERC fellowship. S. Yoo was supported by the Institute for Basic Science (IBS-R029-C1). 

\bibliographystyle{abbrv}
\bibliography{references}

@misc{KYY24b,
       author = {{Kim},Seoyoung and {Yip}, Chi Hoi and {Yoo}, Semin},
        title = "{{P}aley-like quasi-random graphs arising from polynomials}",
          note = {arXiv:2405.09319, 2024},
}

@article {KN,
    AUTHOR = {Kazalicki, Matija and Naskrecki, Bartosz},
     TITLE = {Diophantine triples and {K}3 surfaces},
   JOURNAL = {J. Number Theory},
  FJOURNAL = {Journal of Number Theory},
    VOLUME = {236},
      YEAR = {2022},
     PAGES = {41--70},
      ISSN = {0022-314X,1096-1658},
   MRCLASS = {14H52 (11D09 11G05 14J28)},
  MRNUMBER = {4395340},
MRREVIEWER = {Bidisha\ Roy},
       DOI = {10.1016/j.jnt.2021.07.009},
       URL = {https://doi.org/10.1016/j.jnt.2021.07.009},
}

@article {GSS02,
    AUTHOR = {Gyarmati, K. and S\'ark\"ozy, A. and Stewart, C. L.},
     TITLE = {On shifted products which are powers},
   JOURNAL = {Mathematika},
  FJOURNAL = {Mathematika. A Journal of Pure and Applied Mathematics},
    VOLUME = {49},
      YEAR = {2002},
    NUMBER = {1-2},
     PAGES = {227--230},
      ISSN = {0025-5793},
   MRCLASS = {11D45},
  MRNUMBER = {2059056},
MRREVIEWER = {Andrej\ Dujella},
       DOI = {10.1112/S0025579300016193},
       URL = {https://doi.org/10.1112/S0025579300016193},
}

@article {HTZ19,
    AUTHOR = {He, Bo and Togb\'{e}, Alain and Ziegler, Volker},
     TITLE = {There is no {D}iophantine quintuple},
   JOURNAL = {Trans. Amer. Math. Soc.},
  FJOURNAL = {Transactions of the American Mathematical Society},
    VOLUME = {371},
      YEAR = {2019},
    NUMBER = {9},
     PAGES = {6665--6709},
      ISSN = {0002-9947},
   MRCLASS = {11B37 (11D09 11J68 11J86 11Y50)},
  MRNUMBER = {3937341},
MRREVIEWER = {Paul M. Voutier},
       DOI = {10.1090/tran/7573},
       URL = {https://doi.org/10.1090/tran/7573},
}

@article {CY26,
    AUTHOR = {Croot, Ernie and Yip, Chi Hoi},
     TITLE = {Diophantine tuples and product sets in shifted powers},
   JOURNAL = {J. Lond. Math. Soc. (2)},
  FJOURNAL = {Journal of the London Mathematical Society. Second Series},
    VOLUME = {113},
      YEAR = {2026},
    NUMBER = {3},
     PAGES = {Paper No. e70499},
      ISSN = {0024-6107,1469-7750},
   MRCLASS = {Prelim},
  MRNUMBER = {5042533},
       DOI = {10.1112/jlms.70499},
       URL = {https://doi.org/10.1112/jlms.70499},
}

@article {BHP25,
    AUTHOR = {Batta, G. and Hajdu, Lajos and Pongr\'acz, Andr\'as},
     TITLE = {On {D}iophantine graphs},
   JOURNAL = {J. Lond. Math. Soc. (2)},
  FJOURNAL = {Journal of the London Mathematical Society. Second Series},
    VOLUME = {111},
      YEAR = {2025},
    NUMBER = {5},
     PAGES = {Paper No. e70163, 21},
      ISSN = {0024-6107,1469-7750},
   MRCLASS = {11B75 (05C15 05C42 11D09 11D25)},
  MRNUMBER = {4911223},
       DOI = {10.1112/jlms.70163},
       URL = {https://doi.org/10.1112/jlms.70163},
}

@book {D24,
    AUTHOR = {Dujella, Andrej},
     TITLE = {{D}iophantine $m$-tuples and {E}lliptic Curves},
    SERIES = {Developments in Mathematics},
    VOLUME = {79},
 PUBLISHER = {Springer, Cham},
      YEAR = {2024},
       DOI = {https://doi.org/10.1007/978-3-031-56724-7},
}

@article {YY25,
  author = {{Yip}, Chi Hoi and {Yoo}, Semin},
        title = "{$F$-{D}iophantine sets over finite fields}",
   JOURNAL = {Int. J. Number Theory},
  FJOURNAL = {International Journal of Number Theory},
    VOLUME = {21},
      YEAR = {2025},
    NUMBER = {5},
     PAGES = {1043--1050},
}

@article {C90,
    AUTHOR = {Chung, Fan R. K.},
     TITLE = {Quasi-random classes of hypergraphs},
   JOURNAL = {Random Structures Algorithms},
  FJOURNAL = {Random Structures \& Algorithms},
    VOLUME = {1},
      YEAR = {1990},
    NUMBER = {4},
     PAGES = {363--382},
      ISSN = {1042-9832,1098-2418},
   MRCLASS = {05C65 (05C80)},
  MRNUMBER = {1138430},
MRREVIEWER = {A.\ G.\ Thomason},
       DOI = {10.1002/rsa.3240010401},
       URL = {https://doi.org/10.1002/rsa.3240010401},
}

@article {AS,
    AUTHOR = {Adolphson, A. and Sperber, Steven},
     TITLE = {Character sums in finite fields},
   JOURNAL = {Compositio Math.},
  FJOURNAL = {Compositio Mathematica},
    VOLUME = {52},
      YEAR = {1984},
    NUMBER = {3},
     PAGES = {325--354},
      ISSN = {0010-437X,1570-5846},
   MRCLASS = {11L10 (11T21 14F30 14G15)},
  MRNUMBER = {756726},
MRREVIEWER = {Daniel\ Barsky},
       URL = {http://www.numdam.org/item?id=CM_1984__52_3_325_0},
}

@article {21DK,
    AUTHOR = {Dujella, Andrej and Kazalicki, Matija},
     TITLE = {Diophantine {$m$}-tuples in finite fields and modular forms},
   JOURNAL = {Res. Number Theory},
  FJOURNAL = {Research in Number Theory},
    VOLUME = {7},
      YEAR = {2021},
    NUMBER = {1},
     PAGES = {Paper No. 3, 24},
}

@book {LN97,
    AUTHOR = {Lidl, Rudolf and Niederreiter, Harald},
     TITLE = {Finite fields},
    SERIES = {Encyclopedia of Mathematics and its Applications},
    VOLUME = {20},
   EDITION = {Second},
 PUBLISHER = {Cambridge University Press, Cambridge},
      YEAR = {1997},
     PAGES = {xiv+755},
      ISBN = {0-521-39231-4},
   MRCLASS = {11Txx},
  MRNUMBER = {1429394},
}

@article {GM20,
    AUTHOR = {G\"{u}lo\u{g}lu, Ahmet M. and Murty, M. Ram},
     TITLE = {The {P}aley graph conjecture and {D}iophantine {$m$}-tuples},
   JOURNAL = {J. Combin. Theory Ser. A},
  FJOURNAL = {Journal of Combinatorial Theory. Series A},
    VOLUME = {170},
      YEAR = {2020},
     PAGES = {105155, 9},
      ISSN = {0097-3165},
   MRCLASS = {11D72},
  MRNUMBER = {4016095},
MRREVIEWER = {B. Sury},
       DOI = {10.1016/j.jcta.2019.105155},
       URL = {https://doi.org/10.1016/j.jcta.2019.105155},
}

@article {CGW89,
    AUTHOR = {Chung, F. R. K. and Graham, R. L. and Wilson, R. M.},
     TITLE = {Quasi-random graphs},
   JOURNAL = {Combinatorica},
  FJOURNAL = {Combinatorica. An International Journal on Combinatorics and
              the Theory of Computing},
    VOLUME = {9},
      YEAR = {1989},
    NUMBER = {4},
     PAGES = {345--362},
      ISSN = {0209-9683},
   MRCLASS = {05C80},
  MRNUMBER = {1054011},
MRREVIEWER = {Zbigniew Palka},
       DOI = {10.1007/BF02125347},
       URL = {https://doi.org/10.1007/BF02125347},
}

@article {BDHL11,
    AUTHOR = {B\'{e}rczes, Attila and Dujella, Andrej and Hajdu, Lajos and Luca,
              Florian},
     TITLE = {On the size of sets whose elements have perfect power
              {$n$}-shifted products},
   JOURNAL = {Publ. Math. Debrecen},
  FJOURNAL = {Publicationes Mathematicae Debrecen},
    VOLUME = {79},
      YEAR = {2011},
    NUMBER = {3-4},
     PAGES = {325--339},
      ISSN = {0033-3883},
   MRCLASS = {11B75 (11D45)},
  MRNUMBER = {2907969},
MRREVIEWER = {Christian Elsholtz},
       DOI = {10.5486/PMD.2011.5155},
       URL = {https://doi.org/10.5486/PMD.2011.5155},
}

@article {G01,
    AUTHOR = {Gyarmati, Katalin},
     TITLE = {On a problem of {D}iophantus},
   JOURNAL = {Acta Arith.},
  FJOURNAL = {Acta Arithmetica},
    VOLUME = {97},
      YEAR = {2001},
    NUMBER = {1},
     PAGES = {53--65},
      ISSN = {0065-1036},
   MRCLASS = {11D41 (11B99 11L40 11N36)},
  MRNUMBER = {1819622},
MRREVIEWER = {Andrej Dujella},
       DOI = {10.4064/aa97-1-3},
       URL = {https://doi.org/10.4064/aa97-1-3},
}

@article {GS08,
    AUTHOR = {Gyarmati, K. and S\'{a}rk\"{o}zy, A.},
     TITLE = {Equations in finite fields with restricted solution sets. {I}.
              {C}haracter sums},
   JOURNAL = {Acta Math. Hungar.},
  FJOURNAL = {Acta Mathematica Hungarica},
    VOLUME = {118},
      YEAR = {2008},
    NUMBER = {1-2},
     PAGES = {129--148},
      ISSN = {0236-5294,1588-2632},
   MRCLASS = {11T24},
  MRNUMBER = {2378545},
MRREVIEWER = {Ben\ Joseph\ Green},
       DOI = {10.1007/s10474-007-6192-5},
       URL = {https://doi.org/10.1007/s10474-007-6192-5},
}

@article {BG04,
    AUTHOR = {Bugeaud, Yann and Gyarmati, Katalin},
     TITLE = {On generalizations of a problem of {D}iophantus},
   JOURNAL = {Illinois J. Math.},
  FJOURNAL = {Illinois Journal of Mathematics},
    VOLUME = {48},
      YEAR = {2004},
    NUMBER = {4},
     PAGES = {1105--1115},
      ISSN = {0019-2082,1945-6581},
   MRCLASS = {11D99 (11B75)},
  MRNUMBER = {2113668},
MRREVIEWER = {P.\ G.\ Walsh},
       URL = {http://projecteuclid.org/euclid.ijm/1258138502},
}

@article {BDHT16,
    AUTHOR = {B\'{e}rczes, Attila and Dujella, Andrej and Hajdu, Lajos and
              Tengely, Szabolcs},
     TITLE = {Finiteness results for {$F$}-{D}iophantine sets},
   JOURNAL = {Monatsh. Math.},
  FJOURNAL = {Monatshefte f\"{u}r Mathematik},
    VOLUME = {180},
      YEAR = {2016},
    NUMBER = {3},
     PAGES = {469--484},
      ISSN = {0026-9255,1436-5081},
   MRCLASS = {11D45 (11D61)},
  MRNUMBER = {3513216},
MRREVIEWER = {Florian\ Luca},
       DOI = {10.1007/s00605-015-0824-6},
       URL = {https://doi.org/10.1007/s00605-015-0824-6},
}

@misc{KYY,
       author = {{Kim},Seoyoung and {Yip}, Chi Hoi and {Yoo}, Semin},
        title = "{Multiplicative structure of shifted multiplicative subgroups and its applications to Diophantine tuples}",
          note = {Canad. J. Math., to appear. \url{https://doi.org/10.4153/S0008414X25000136}},
}

@article {LW54,
    AUTHOR = {Lang, Serge and Weil, Andr\'e},
     TITLE = {Number of points of varieties in finite fields},
   JOURNAL = {Amer. J. Math.},
  FJOURNAL = {American Journal of Mathematics},
    VOLUME = {76},
      YEAR = {1954},
     PAGES = {819--827},
      ISSN = {0002-9327,1080-6377},
   MRCLASS = {14.0X},
  MRNUMBER = {65218},
MRREVIEWER = {B.\ Segre},
       DOI = {10.2307/2372655},
       URL = {https://doi.org/10.2307/2372655},
}

@article {CM06,
    AUTHOR = {Cafure, Antonio and Matera, Guillermo},
     TITLE = {Improved explicit estimates on the number of solutions of
              equations over a finite field},
   JOURNAL = {Finite Fields Appl.},
  FJOURNAL = {Finite Fields and their Applications},
    VOLUME = {12},
      YEAR = {2006},
    NUMBER = {2},
     PAGES = {155--185},
      ISSN = {1071-5797,1090-2465},
   MRCLASS = {11G25 (11D79)},
  MRNUMBER = {2206396},
MRREVIEWER = {Ioulia\ N.\ Baoulina},
       DOI = {10.1016/j.ffa.2005.03.003},
       URL = {https://doi.org/10.1016/j.ffa.2005.03.003},
}

@article{S26,
       author = {{Slavov}, Kaloyan},
        title = "{Square values of several polynomials over a finite field}",
       JOURNAL = {Finite Fields Appl.},
  FJOURNAL = {Finite Fields and their Applications},
    VOLUME = {109},
      YEAR = {2026},
     PAGES = {Paper No. 102696},
}

@incollection {T87a,
    AUTHOR = {Thomason, Andrew},
     TITLE = {Random graphs, strongly regular graphs and pseudorandom
              graphs},
 BOOKTITLE = {Surveys in combinatorics 1987 ({N}ew {C}ross, 1987)},
    SERIES = {London Math. Soc. Lecture Note Ser.},
    VOLUME = {123},
     PAGES = {173--195},
 PUBLISHER = {Cambridge Univ. Press, Cambridge},
      YEAR = {1987},
      ISBN = {0-521-34805-6},
   MRCLASS = {05C80 (05C35 05C55)},
  MRNUMBER = {905280},
MRREVIEWER = {Colin\ J. H. McDiarmid},
}

@incollection {T87b,
    AUTHOR = {Thomason, Andrew},
     TITLE = {Pseudorandom graphs},
 BOOKTITLE = {Random graphs '85 ({P}ozna\'{n}, 1985)},
    SERIES = {North-Holland Math. Stud.},
    VOLUME = {144},
     PAGES = {307--331},
 PUBLISHER = {North-Holland, Amsterdam},
      YEAR = {1987},
      ISBN = {0-444-70265-2},
   MRCLASS = {05C80},
  MRNUMBER = {930498},
MRREVIEWER = {Edward\ R.\ Scheinerman},
}

@article {KYY24a,
   author = {{Kim},Seoyoung and {Yip}, Chi Hoi and {Yoo}, Semin},
        title = "{Explicit constructions of {D}iophantine tuples over finite fields}",
   JOURNAL = {Ramanujan J.},
  FJOURNAL = {Ramanujan Journal. An International Journal Devoted to the
              Areas of Mathematics Influenced by Ramanujan},
    VOLUME = {65},
      YEAR = {2024},
    NUMBER = {1},
     PAGES = {163--172},
}

@article {Y24,
    AUTHOR = {Yip, Chi Hoi},
     TITLE = {Multiplicatively reducible subsets of shifted perfect {$k$}th
              powers and bipartite {D}iophantine tuples},
   JOURNAL = {Acta Arith.},
  FJOURNAL = {Acta Arithmetica},
    VOLUME = {218},
      YEAR = {2025},
    NUMBER = {3},
     PAGES = {251--271},
      ISSN = {0065-1036},
   MRCLASS = {11D45 (11B30 11D72 11N36)},
  MRNUMBER = {4889098},
       DOI = {10.4064/aa240520-24-9},
       URL = {https://doi.org/10.4064/aa240520-24-9},
}

@article {CG90,
    AUTHOR = {Chung, F. R. K. and Graham, R. L.},
     TITLE = {Quasi-random hypergraphs},
   JOURNAL = {Random Structures Algorithms},
  FJOURNAL = {Random Structures \& Algorithms},
    VOLUME = {1},
      YEAR = {1990},
    NUMBER = {1},
     PAGES = {105--124},
      ISSN = {1042-9832,1098-2418},
   MRCLASS = {05C65 (05C80)},
  MRNUMBER = {1068494},
MRREVIEWER = {A.\ G.\ Thomason},
       DOI = {10.1002/rsa.3240010108},
       URL = {https://doi.org/10.1002/rsa.3240010108},
}

@article {k23,
    AUTHOR = {Hammonds, Trajan and Kim, Seoyoung and Miller, Steven J. and
              Nigam, Arjun and Onghai, Kyle and Saikia, Dishant and Sharma,
              Lalit M.},
     TITLE = {{$k$}-{D}iophantine {$m$}-tuples in finite fields},
   JOURNAL = {Int. J. Number Theory},
  FJOURNAL = {International Journal of Number Theory},
    VOLUME = {19},
      YEAR = {2023},
    NUMBER = {4},
     PAGES = {891--912},
      ISSN = {1793-0421,1793-7310},
   MRCLASS = {11D79 (11D45)},
  MRNUMBER = {4555391},
       DOI = {10.1142/S1793042123500458},
       URL = {https://doi.org/10.1142/S1793042123500458},
}

@article {S80,
    AUTHOR = {Schwartz, J. T.},
     TITLE = {Fast probabilistic algorithms for verification of polynomial
              identities},
   JOURNAL = {J. Assoc. Comput. Mach.},
  FJOURNAL = {Journal of the Association for Computing Machinery},
    VOLUME = {27},
      YEAR = {1980},
    NUMBER = {4},
     PAGES = {701--717},
      ISSN = {0004-5411,1557-735X},
   MRCLASS = {68C20 (03B35 12-04)},
  MRNUMBER = {594695},
       DOI = {10.1145/322217.322225},
       URL = {https://doi.org/10.1145/322217.322225},
}

@article {Shparlinski23,
    AUTHOR = {Shparlinski, Igor E.},
     TITLE = {On the number of {D}iophantine {$m$}-tuples in finite fields},
   JOURNAL = {Finite Fields Appl.},
  FJOURNAL = {Finite Fields and their Applications},
    VOLUME = {90},
      YEAR = {2023},
     PAGES = {Paper No. 102241, 7},
}

@incollection {HT89,
    AUTHOR = {Haviland, Julie and Thomason, Andrew},
     TITLE = {Pseudo-random hypergraphs},
      NOTE = {Graph theory and combinatorics (Cambridge, 1988)},
   JOURNAL = {Discrete Math.},
  FJOURNAL = {Discrete Mathematics},
    VOLUME = {75},
      YEAR = {1989},
    NUMBER = {1-3},
     PAGES = {255--278},
      ISSN = {0012-365X,1872-681X},
}

@article {AH20,
    AUTHOR = {Aigner-Horev, Elad and H\`an, Hi\cfudot ep},
     TITLE = {Linear quasi-randomness of subsets of abelian groups and
              hypergraphs},
   JOURNAL = {European J. Combin.},
  FJOURNAL = {European Journal of Combinatorics},
    VOLUME = {88},
      YEAR = {2020},
     PAGES = {103116, 16},
      ISSN = {0195-6698,1095-9971},
}

@article {CHPS12,
    AUTHOR = {Conlon, David and H\`an, Hi\^ep and Person, Yury and Schacht,
              Mathias},
     TITLE = {Weak quasi-randomness for uniform hypergraphs},
   JOURNAL = {Random Structures Algorithms},
  FJOURNAL = {Random Structures \& Algorithms},
    VOLUME = {40},
      YEAR = {2012},
    NUMBER = {1},
     PAGES = {1--38},
}

@article {Gu26,
    AUTHOR = {Gu, Zijie},
     TITLE = {A generalisation of diophantine tuples},
   JOURNAL = {Ramanujan J.},
  FJOURNAL = {Ramanujan Journal. An International Journal Devoted to the
              Areas of Mathematics Influenced by Ramanujan},
    VOLUME = {69},
      YEAR = {2026},
    NUMBER = {3},
     PAGES = {72},
}

\end{document}